\def\P{\mathbb{P}}
\def\E{\mathbb{E}}
\def\R{\mathbb{R}}
\def\11{\mathbbm{1}}
\newtheorem{thm}{Theorem}[section]
\newtheorem{proposition}[thm]{Proposition}
\newtheorem{lemma}[thm]{Lemma}
\theoremstyle{definition}
\newtheorem{remark}[thm]{Remark}
\numberwithin{equation}{section}
\begin{document}
\title{Shotgun assembly threshold for lattice labeling model}

\author{Jian Ding\\Peking University \and Haoyu Liu\\Peking University}

\maketitle

\begin{abstract}
We study the shotgun assembly problem for the lattice labeling model, where i.i.d.\ uniform labels are assigned to each vertex in a $d$-dimensional box of side length $n$. We wish to recover the labeling configuration on the whole box given empirical profile of labeling configurations on all boxes of side length $r$. We determine the threshold around which there is a sharp transition from impossible to recover with probability tending to 1, to possible to recover with an efficient algorithm with probability tending to 1. Our result sharpens a constant factor in a previous work of Mossel and Ross (2019) and thus solves a question therein.
\end{abstract}

\section{Introduction}
The shotgun assembly problems of labeled graphs in general aim for recovering a global structure from local observations. This set of problems have substantial interests in applications such as DNA sequencing \cite{AMRW96, DFS94, MBT13} and recovering neural networks \cite{KPPSP13}. We learned the precise formulation and the general mathematical framework for shotgun assembly questions from the inspiring paper \cite{MR19}. Since (the circulation of) \cite{MR19}, there has been extensive study on shotgun assembly questions including on random jigsaw problems \cite{BBN17, Martinsson19, BFM20,  HLW20},  on random graph models \cite{MS15, GM22, HT21, AC22}, on random coloring model \cite{PRS22} and on some extension of DNA sequencing model \cite{RBM21}. 

In this paper we study the shotgun assembly for the lattice labeling model, whose precise mathematical formulation was proposed in \cite{MR19}. For $d\geq 1$, let $\Lambda_n = \{v\in \mathbb Z^d: |v|_\infty \leq n-1, v_i\geq 0 \mbox{ for all } 1\leq i\leq d\}$ be the box of $n^d$ vertices with the origin $o\in \mathbb Z^d$ being its smallest corner (here $|\cdot|_\infty$ denotes the $\ell_\infty$-norm of a vector, and we say $x\leq y$ if $x_i \leq y_i$ for $1\leq i\leq d$).  For $q\geq 1$, let $\sigma_v$ be i.i.d.\ labels uniformly sampled from $\{\mathsf 1, \ldots, \mathsf q\}$. For $r\geq 1$, let $\mathfrak B_{r} = \mathfrak B_{n, r}$ be the collection of all $r$-boxes (an $r$-box is a box with $r^d$ vertices) contained in $\Lambda_n$. For $B\in \mathfrak B_r$, let $\tau_B$ be the translation which maps $B$ to $\Lambda_r$. As in \cite{MR19}, we wish to recover $\{\sigma_v: v\in \Lambda_n\}$ from the empirical profile $\{\sigma|_{B}: B\in \mathfrak B_{n, r}\}$ where $\sigma|_B = \{\sigma_{\tau_B^{-1}(v)}: v\in \Lambda_r\}$. In words, our observations are labeling configurations in all the $r$-boxes  without information on locations for these $r$-boxes. (Note that in our formulation, we choose to assume that the orientation of the $r$-box is observable to us, and the similar cases when the labeling configuration is only known up to rotation/reflection symmetry can be treated by our method similarly, see Section~\ref{sec:extension}). We say that the labeling configuration is \emph{non-identifiable} if there exist two different labeling configurations on $\Lambda_n$ which would produce the same  empirical profile $\{\sigma|_{B}: B\in \mathfrak B_{n, r}\}$; otherwise we say that the labeling configuration is \emph{identifiable}.  Previously, the best result was due to \cite{MR19} which provided upper and lower bounds on the identifiability threshold up to a multiplicative constant factor. Our main contribution determines the sharp identifiability threshold, which solves \cite[Question 1.3]{MR19} (in fact, we also find the explicit formula for the threshold which was mentioned as a challenging problem in \cite{MR19}).

\begin{thm}\label{thm-main}
The following hold for any fixed $\epsilon>0$.

For $d=1$, with probability tending to 1 as $n\to \infty$ the labeling configuration is identifiable when $r \geq \frac{2(1+\epsilon)\log n}{\log q}$ and non-identifiable when $r \leq \frac{2(1-\epsilon)\log n}{\log q}$.

For $d\geq 2$, with probability tending to 1 as $n\to \infty$ the labeling configuration is identifiable when $r^d \geq \frac{d(1+\epsilon)\log n}{\log q}$ and non-identifiable when $r^d \leq \frac{d(1-\epsilon)\log n}{\log q}$.

Furthermore, in the aforementioned identifiable regimes, the labeling can be recovered by a polynomial-time algorithm. 
\end{thm}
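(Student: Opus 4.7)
The plan is to prove the two halves separately: the lower bound (non-identifiability below threshold) and the upper bound (identifiability, with polynomial-time recovery, above threshold).

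For the lower bound I would construct, with probability tending to $1$, two distinct labelings $\sigma \neq \tilde{\sigma}$ on $\Lambda_n$ yielding the same $r$-box profile. In the case $d=1$, I would interpret $\sigma$ as an Eulerian walk on the de Bruijn graph whose vertex set is $\{\mathsf{1},\ldots,\mathsf{q}\}^{r-1}$ and whose edges correspond to $r$-strings; two walks traversing the same edge multiset yield two labelings with the same profile. The key combinatorial tool is the cycle-swap: given four positions $t_1 < t_2 < t_3 < t_4$ and two distinct $(r-1)$-strings $u \neq v$ such that the walk visits $u$ at times $t_1,t_3$ and $v$ at times $t_2,t_4$ (an interlocking pattern), one obtains a distinct Eulerian trail by exchanging the two middle segments (both of which run from $u$ to $v$). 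The expected number of such quadruples is $\Theta(n^4 q^{-2(r-1)})$, which diverges precisely when $r \leq 2(1-\epsilon)\log n/\log q$, and a second-moment calculation then converts expectation into existence w.h.p. For $d \geq 2$, I would use a higher-dimensional analog. Concretely for $d=2$, find four $r$-boxes placed at the corners $(p,\,p+se_1,\,p+te_2,\,p+se_1+te_2)$ of an axis-aligned rectangle, with matching $r$-box contents along two opposite edges; the expected count of such configurations is $\Theta(n^4 q^{-2 r^2})$, diverging exactly at $r^2 \leq 2(1-\epsilon)\log n/\log q$. An analogous ``balanced corner configuration'' in $d \geq 3$ is designed so that the parameter count $X$ and the number of $r^d$-matching constraints $Y$ satisfy $X/Y = d$, making the expected count $\Theta(n^X q^{-Y r^d})$ diverge at the claimed threshold $r^d \leq d(1-\epsilon)\log n/\log q$; a second-moment argument again gives existence w.h.p.

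For the upper bound I would describe a polynomial-time reconstruction algorithm, treating each observation $\sigma|_B$ as a tile of known orientation. The algorithm first locates an anchor tile at a known corner of $\Lambda_n$ (identifying it by analyzing tile multiplicities and contextual consistency near the corners), then greedily extends the partially reconstructed labeling by placing at each step the unique tile that overlaps the current configuration on a $(r-1)$-dimensional interface: a shared $(r-1)$-prefix or suffix in $d=1$, and a shared $(r-1)$-hyperface in $d \geq 2$. Correctness reduces to ruling out ambiguous extensions, which I would control by a sharp union bound over minimal ``bad events'' --- pairs of candidate tiles sharing an interface but disagreeing elsewhere. The key accounting is that the number of such minimal bad events is $\Theta(n^X)$ with $X$ matching the parameter count of the lower-bound construction, each occurring with probability $\Theta(q^{-Y r^d})$ (or $\Theta(q^{-r})$ in $d=1$), so the total expected count is $o(1)$ precisely when $r^d \geq d(1+\epsilon)\log n/\log q$ (respectively $r \geq 2(1+\epsilon)\log n/\log q$).

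The hard part will be achieving the sharp constants in both directions simultaneously. On the lower bound, the second-moment computation must control the covariances between overlapping switchable configurations, and identifying the right ``minimal swap'' (interlocking quadruple in $d=1$; corner configuration in $d \geq 2$) requires care, since too-large configurations produce weaker expected counts while too-small ones fail to generate a valid swap. On the upper bound, the union bound over bad events must match the lower-bound construction in effective complexity: a naive bound over all overlapping tile pairs would produce a constant-factor weaker threshold, and achieving sharpness requires reducing the effective number of free parameters by exploiting the structure of minimal non-identifiability witnesses --- in particular, showing that any non-identifiability witness contains a switchable configuration of the form constructed in the lower bound, so that the parameterization used in both halves is aligned.
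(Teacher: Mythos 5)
Your $d=1$ arguments are essentially correct and match the paper (and the classical interlocking-repeat/Eulerian-path analysis it cites), but both halves of your plan for $d\ge 2$ --- which is the actual content of the theorem --- have genuine gaps. For non-identifiability, the ``corner configuration'' does not produce a valid swap. In $d=1$ an interlocking repeat works because a swapped segment has a zero-dimensional boundary: only the windows crossing its two endpoints are affected, and these are controlled by the two matching $r$-blocks. In $d\ge 2$, exchanging any two translated regions $A,A'$ of diameter $s$ alters every $r$-box meeting the $(d-1)$-dimensional boundary of $A$ or $A'$; for the empirical profile to survive one generically needs the entire $(r-1)$-collars of $A$ and $A'$ to agree, an event of probability $q^{-\Theta(r s^{d-1})}$ against only $n^{2d}$ positional choices, which at the claimed threshold forces $s=O(r)$ and hence leaves no interior to swap. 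Matching contents at four corner boxes leaves the long sides of your rectangle entirely unconstrained, so the parameter count $X/Y=d$ is numerology without a construction behind it. The paper's witness is necessarily global: it swaps the labels of two sets $V'_{\mathsf 1}\subset\sigma^{-1}(\mathsf 1)$, $V'_{\mathsf 2}\subset\sigma^{-1}(\mathsf 2)$ of size about $n^{d(1-\delta)}$, pairwise $2r$-separated, whose punctured-neighborhood empirical profiles coincide exactly (Proposition~\ref{prop-match-empirical-profile}), and existence is proved by a non-standard conditional second-moment/pigeonhole argument (Lemmas~\ref{lem-match-probability} and \ref{lem-force-overlap}) precisely because a second moment over fixed-size local witnesses fails.

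Your upper bound for $d\ge 2$ has a matching problem: above the threshold the expected number of coinciding pairs of $(r-1)$-boxes is $n^{2d}q^{-(r-1)^d}\approx n^{d(1-\epsilon)}\to\infty$, so ambiguous interfaces are abundant, the greedy extension stalls, and no union bound over ``pairs of tiles sharing an interface'' can be made $o(1)$ at the sharp constant. Your proposed remedy --- aligning the bad events with minimal non-identifiability witnesses from the lower bound --- is unavailable because, as just explained, those witnesses are not local. The paper instead (i) shows by a coarse-graining/Peierls argument (Lemma~\ref{lem-S-closed}, Proposition~\ref{prop-Step-2}) that the locally-unique region percolates from a $2r$-corner box and the closed region splits into weakly connected components of diameter $O(r)$, and (ii) recovers each leftover vertex $v$ by exhibiting $r$ shifted, already-determined rectangles $B_0(v),\dots,B_{r-1}(v)$ and proving that with high probability at least one is unique; the probability that all $r$ fail is small enough to union bound over all $n^d$ vertices (Proposition~\ref{prop-Step-3}). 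It is this ``one of $r$ candidates'' device combined with percolation, not a sharper count of interface collisions, that buys the constant $d$.
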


\begin{remark}
Note that the identifiability threshold for $d=1$ was known in much more precise manner from previous works \cite{AMRW96, DFS94, MR19} (see \cite[Theorem 1]{DFS94} and \cite[Proposition 3.2]{MR19}): \cite{DFS94} follows an observation of \cite{Pavel89} that the identifiability is equivalent to the existence of a unique Eulerian path on graphs defined on vertices formed by sub-strings, and this also motivated some considerations in \cite{MR19}. 
We record the result for $d=1$ here only for completeness, and in fact we also provide a proof for non-identifiability for $d=1$ as it seems to provide some intuition that can perhaps be grasped more easily than the proof of \cite{DFS94}. An interesting question is whether some extension of Eulerian path consideration would apply in higher dimensions and how that is related to our proof method. We are not quite sure about this for the following two reasons: (1) there seem to be multiple choices in building the analogous graph in higher dimensions and it is not immediately clear to us that one version of this gives a necessary and sufficient condition for identifiability; (2) perhaps more importantly it is unclear to us that it is the most productive way to try to first formulate a sufficient and necessary condition for identifiability and then to prove whether this condition occurs or not using  probabilistic arguments. As one will see in our approach, we used two related structural properties to prove identifiability and non-identifiability and \emph{a priori} it is unclear whether these structural properties provide a necessary and sufficient condition for identifiability but it just turns out that both structural properties deeply depend on whether a typical $r$-box is unique or not and as a result this allows us to establish the sharp threshold.
\end{remark}

\begin{remark}
We see that there is a conceptual difference between $d=1$ and $d\geq 2$, which is essentially rooted in the fact that there is a phase transition for percolation when $d\geq 2$ but not when $d=1$. This point will be further manifested in our proof strategy (see discussions at the beginning of Section~\ref{sec:identifiability}).
\end{remark}

\begin{remark}
When proving non-identifiability for $d\geq 2$, we show that there exist two subsets whose original labels are $\mathsf 1$ and $\mathsf 2$ such that after swapping their labels the empirical profile for local labeling configurations on $r$-boxes remains the same (see Proposition~\ref{prop-match-empirical-profile}). We feel the second moment method we employed for the proof of Proposition~\ref{prop-match-empirical-profile} is somewhat novel and may be useful in other contexts (see discussions that follow Proposition~\ref{prop-match-empirical-profile} for more details).
\end{remark}

\begin{remark}\label{rem-extension}
Theorem~\ref{thm-main} holds in the case where two labeling configurations on a box $\Lambda_n$ are viewed the same if one can be mapped to the other by rotation and/or reflection. We discuss briefly the minor modifications required for the proof in Section~\ref{sec:extension}.
\end{remark}

\begin{remark}
We expect that our method should also apply to i.i.d.\ labels with non-uniform distribution. Further, our method may shed some light on models without independence, but the dependence seems to incur substantial challenge which our current method fails to address. We think it would be an interesting direction to consider random labels with spatial mixing such as the Ising model in high temperatures. We expect that our method would be helpful but even in this setting the challenge seems to be substantial enough for us to make any convincing guess. Another interesting future direction is to investigate the situation when observations are noisy.
\end{remark}

\noindent {\bf Acknowledgement:} we warmly thank Nathan Ross for helpful discussions.

\section{Proof of non-identifiability}\label{proof-of-non-identifiability}

\subsection{The case for $d=1$}\label{sec:1d}

We first provide the proof of non-identifiability for $d=1$, which is the (much) easier part of our main theorem. In this subsection, we assume that
\begin{equation}\label{eq-1d}
q^r \le n^{2(1 - \epsilon)} \mbox{ for an arbitrary fixed small } \epsilon>0\,.
\end{equation}

Let $I_1, \ldots, I_6$ be 6 disjoint and consecutive intervals in $\Lambda_n$, each of which has $m = \lfloor n/6 \rfloor$ vertices. For each $I_j$, let $\Gamma_j$ be a collection of $\ell = \lfloor m/r \rfloor$ disjoint intervals of  $r$ vertices in $I_j$.
We will show that with probability tending to 1, there exist $B_j \in \Gamma_j$ for $j = 1, 3, 4, 6$ such that 
\begin{equation}\label{eq-1d-B-s}
\sigma|_{B_1} = \sigma|_{B_4} \mbox{ and } \sigma|_{B_3} = \sigma|_{B_6}
\end{equation} and that 
\begin{equation}\label{eq-1d-J-J'}
\sigma|_{J[1, m]} \neq \sigma|_{J'[1, m]}
\end{equation} where $J$ is the interval strictly between $B_1$ and $B_3$ and $J'$ is the interval strictly between $B_4$ and $B_6$, and $J[1, m]$ is the initial segment of $J$ with $m$ vertices (similarly for $J'[1, m]$). We see that $J$ and $J'$ divide $\Lambda_n$ into 5 disjoint intervals, and we list them from left to right as $K_1, J, K_2, J', K_3$. Let $\tau$ be a bijection on $\Lambda_n$ so that $\tau$ maps integers $0, \ldots, n-1$ as a sequence obtained from concatenating $K_1, J', K_2, J, K_3$ (i.e., we swap $J$ with $J'$---note that this can be done even when $J, J'$ have different lengths). On the event described in our claim, we see that the labeling configuration $\sigma'$ with $\sigma'(v) = \sigma(\tau(v))$ preserves the empirical profile on $r$-boxes but $\sigma' \neq \sigma$.

It remains to prove the claim. Let $Z = \sum_{B\in \Gamma_1, B'\in \Gamma_4} \mathbf 1_{\sigma|_{B} = \sigma|_{B'} }$. Then, from straightforward computations, we have $\E Z = \ell^2 q^{- r} \geq n^{\epsilon'}$ for some $\epsilon'>0$. In addition, 
$$\E Z^2 \leq  \E Z + \ell^4  q^{- 2r} = (1+o(1)) (\E Z)^2\,,$$
where the inequality follows from the fact that $\P(\sigma|_{B} = \sigma|_{B'}, \sigma|_{\tilde B} = \sigma_{\tilde B}') = q^{- 2r}$ for $B, \tilde B \in \Gamma_1$, $B', \tilde B' \in \Gamma_4$ as long as $B \neq  \tilde B$  or $B'\neq \tilde B'$. By Chebyshev's inequality, we see that  $Z \geq n^{\epsilon'}/2$ with probability tending to 1. A similar computation applies to $\Gamma_3$ and $\Gamma_6$. So this ensures the existence of $B_1, B_3, B_4, B_6$ satisfying \eqref{eq-1d-B-s}. Finally, a simple union bound yields that with probability tending to 1 for any disjoint intervals $K$ and $K'$ which contain $I_2$ and $I_5$ respectively, we have $\sigma|_{K[1,m]} \neq \sigma|_{K'[1, m]}$ (this is because $\P(\sigma|_{K[1,m]} = \sigma|_{K'[1, m]}) \leq e^{-cn}$ for some constant $c>0$ and the number of choices for such $K, K'$ is only polynomial in $n$). This verifies \eqref{eq-1d-J-J'} and thus completes the proof.

\subsection{The case for $d\geq 2$}

In this subsection we consider the non-identifiable regime where $d \ge 2$ and
\begin{equation}\label{eq-assume-subcritical}
q^{r^d} \leq n^{d(1 - \epsilon)} \mbox{ for an arbitrary fixed small } \epsilon>0\,.
\end{equation}
For $v \in \Lambda'_n = \{u\in \Lambda_n: r\leq u_i \leq n-r \mbox{ for all } 1\leq i\leq d\}$, let $\mathfrak R_v = \{R_1(v), \ldots, R_{r^d}(v)\}$ be the collection of all $r$-boxes containing $v$. We also set the notation so that the relative location of $v$ in $R_j(v)$ is the same as the relative location of $u$ in $R_j(u)$ for all $u, v\in \Lambda'_n$. For $U\subset \Lambda'_n$, we denote $\mathcal L(U) = (\mathcal L_1(U), \ldots, \mathcal L_{r^d}(U))$ where $\mathcal L_{j}(U)$ is the empirical distribution for $\{\sigma|_{R_j(u) \setminus u}: u\in U\}$. We view $\mathcal L_{j}(U)$ as a $q^{r^d-1}$-dimensional vector where its $s$-th coordinate $\mathcal L_{j, s}(U)$ counts the occurances of the $s$-th configuration (the ordering of the configurations is arbitrary but prefixed).
Denote $V_{\mathsf k} = \{v\in \Lambda'_n: \sigma_v = \mathsf k\}$ for $\mathsf k \in \{\mathsf 1, \ldots, \mathsf q\}$.
\begin{proposition}\label{prop-match-empirical-profile}
Under the assumption \eqref{eq-assume-subcritical} with probability tending to 1 as $n \to \infty$, there exist $V'_{\mathsf 1} \subset ((2r)\mathbb Z^d) \cap V_{\mathsf 1}, V'_{\mathsf 2}\subset ((2r)\mathbb Z^d)\cap V_{\mathsf 2}$ such that $\mathcal L(V'_{\mathsf 1}) = \mathcal L(V'_{\mathsf 2})$.
\end{proposition}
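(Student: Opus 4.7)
The plan is to exploit a key independence structure on the sublattice $(2r)\mathbb{Z}^d$ and then run a carefully designed second-moment argument. Observe first that the $(2r-1)$-boxes $N(v) := \bigcup_{j=1}^{r^d} R_j(v)$ are pairwise disjoint for distinct $v \in (2r)\mathbb{Z}^d$, so the center labels $\sigma_v$ and the signatures $\mathrm{sig}(v) := \sigma|_{N(v) \setminus \{v\}}$ are mutually independent across distinct sublattice points. Writing $\ell(v) \in \{0,1\}^{r^d \cdot q^{r^d-1}}$ for the profile-contribution vector defined so that $\mathcal{L}(V') = \sum_{v \in V'} \ell(v)$, the task reduces to finding disjoint $V_\mathsf{1}' \subset V_\mathsf{1} \cap (2r)\mathbb{Z}^d$ and $V_\mathsf{2}' \subset V_\mathsf{2} \cap (2r)\mathbb{Z}^d$ satisfying $\sum_{v \in V_\mathsf{1}'} \ell(v) = \sum_{v \in V_\mathsf{2}'} \ell(v)$.

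I would attempt a direct count via a second moment. Let $X_s$ denote the number of ordered pairs $(V_\mathsf{1}', V_\mathsf{2}')$ of disjoint size-$s$ subsets of $(2r)\mathbb{Z}^d \cap \Lambda'_n$ with $V_\mathsf{1}' \subset V_\mathsf{1}$, $V_\mathsf{2}' \subset V_\mathsf{2}$, and $\mathcal{L}(V_\mathsf{1}') = \mathcal{L}(V_\mathsf{2}')$. The plan is to choose $s$ so that $\E[X_s] \to \infty$ and $\E[X_s^2] = (1+o(1))(\E[X_s])^2$; Paley--Zygmund then produces such a pair with probability tending to $1$.

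For the first moment, by the independence above,
\[
\E[X_s] \;\approx\; \binom{M_{\mathrm{sub}}}{s}^{2} q^{-2s} \cdot \P[\Delta_s = 0],
\]
where $M_{\mathrm{sub}} := |(2r)\mathbb{Z}^d \cap \Lambda'_n|$ and $\Delta_s := \sum_{i=1}^{s}(\ell_i - \ell_i')$ is a symmetric sum of $s$ iid differences of independent copies of $\ell(v)$. A lattice local central limit theorem yields $\P[\Delta_s = 0] = \Theta(s^{-D_*/2})$, where $D_* := \mathrm{rank}(\mathrm{Cov}(\ell(v)))$. Since the simplex constraints $\sum_{t} \ell_{j,t}(v) = 1$ force $D_* \le r^d(q^{r^d-1} - 1)$, which under \eqref{eq-assume-subcritical} is at most of order $n^{d(1-\epsilon)} \log n$, while $M_{\mathrm{sub}}/q$ is of order $n^d$ up to polylogarithmic factors, choosing $s$ polynomially large in $n$ between these two scales (e.g.\ $s = c\, M_{\mathrm{sub}}/q$ for a small constant $c$) makes $\E[X_s]$ tend to infinity.

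The principal obstacle lies in controlling $\E[X_s^2]$. I would stratify pairs-of-pairs $((V_\mathsf{1}', V_\mathsf{2}'), (V_\mathsf{1}'', V_\mathsf{2}''))$ by the overlap $(a, d) := (|V_\mathsf{1}' \cap V_\mathsf{1}''|, |V_\mathsf{2}' \cap V_\mathsf{2}''|)$, noting that cross overlaps contribute nothing since a vertex cannot carry two distinct labels. A bivariate local CLT then gives $\P[\Delta_s' = 0, \Delta_s'' = 0] \asymp ((2s)^2 - (a+d)^2)^{-D_*/2}$, so the $(0,0)$ slice recovers the main $(\E[X_s])^2$ term while other slices, weighted by $q^{a+d}$ and hypergeometric counts, must be shown subdominant. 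The difficulty is that the joint matching probability grows sharply as $a+d \to 2s$; controlling this in the high-dimensional regime where $D_*$ itself is polynomial in $n$ requires a quantitative LCLT together with delicate enumeration. I expect this is where the novel variant of the second moment hinted at in the paper enters, perhaps through a weighting of $X_s$ or by restricting to pairs equipped with an additional bijective structure between $V_\mathsf{1}'$ and $V_\mathsf{2}'$ that lets the joint matching event factor more cleanly over the sublattice.
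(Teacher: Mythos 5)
Your setup is right (the disjointness of the neighborhoods $N(v)$ over $(2r)\mathbb Z^d$, the reduction to matching sums of profile vectors), and you have correctly diagnosed the obstruction — but you have not overcome it, and the route you propose is precisely the one the paper argues is likely infeasible. Since the matching probability forces $s$ to be a constant fraction of $|(2r)\mathbb Z^d\cap V_{\mathsf k}|$, a typical pair of $s$-subsets overlaps in $\Theta(s)$ points, so the overlap strata in $\E[X_s^2]$ are not a perturbation of the $(0,0)$ stratum: the ratio $\P[\Delta_s'=0,\Delta_s''=0]/\P[\Delta_s'=0]^2$ can be as large as $e^{\mathrm{poly}(n)}$ for large overlaps (the matching probabilities themselves are of order $e^{-\mathrm{poly}(n)}$), and no entropy factor of the form $q^{a+d}$ times hypergeometric counts is shown to beat this. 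Your closing sentence concedes that the key step is missing. A secondary concern is your reliance on a local CLT in ambient dimension $D_*$ that itself grows polynomially in $n$; making the implied constants uniform there (they involve the covariance determinant on the relevant sublattice) is itself a serious undertaking, and the paper avoids it entirely with a crude counting bound: there are at most $K=(M+1)^{r^dq^{r^d-1}}$ possible profiles, so all but $o(1)$ of the mass of $\mathcal L(U)$ sits on atoms of probability at least $\iota_*=e^{-n^{d(1-\epsilon/2)}}$.

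The paper's actual resolution inverts the second moment rather than computing it. Fix a set $U\subset U_{\mathsf 1}$ of size $M=n^{d(1-\delta)}$ and a typical profile $\mathbf L$ (i.e.\ $\mu_M(\mathbf L)\ge\iota_*$), and let $\chi_{\mathbf L}$ be the conditional probability, given $\mathcal L(U)=\mathbf L$, that some \emph{other} $M$-set $W\subset U_{\mathsf 1}$ with $W\cap U\neq\emptyset$ also satisfies $\mathcal L(W)=\mathbf L$. Suppose $\chi_{\mathbf L}\le 1-n^{-1}$ and consider $Z_{\mathbf L}$, the number of $M$-sets $U$ with $\mathcal L(U)=\mathbf L$ and \emph{no} such overlapping twin. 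The twin-free indicator kills every overlapping cross term in $\E Z_{\mathbf L}^2$ by definition, so $\E Z_{\mathbf L}^2\le\binom{N}{M}\binom{N-M}{M}\mu_M(\mathbf L)^2+\binom{N}{M}\mu_M(\mathbf L)$, which one checks is $\ll(\E Z_{\mathbf L})^2$ — contradicting $\E Z_{\mathbf L}^2\ge(\E Z_{\mathbf L})^2$. Hence $\chi_{\mathbf L}\ge1-n^{-1}$ for every typical $\mathbf L$: the troublesome overlaps are not an enemy to be bounded but are shown to be \emph{forced}. Taking $U'=U\setminus W$ and $W'=W\setminus U$ gives nonempty sets with equal profiles inside $U_{\mathsf 1}$, and an exchangeability/coupling argument transfers $W'$ to $U_{\mathsf 2}$. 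Note also that the proposition does not fix the size of $V'_{\mathsf 1},V'_{\mathsf 2}$ in advance — this flexibility (you only need \emph{some} size to work) is exactly what the argument exploits, whereas your $X_s$ commits to a fixed $s$.
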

Proposition \ref{prop-match-empirical-profile} readily implies the non-identifiability since we can swap the labels of $V'_{\mathsf 1}$ and $V'_{\mathsf 2}$ without changing $\{\sigma|_{B}: B\in \mathfrak B_{n, r}\}$. So our main goal in this subsection is to prove Proposition~\ref{prop-match-empirical-profile}. Usually in order to prove the existence of such pair of sets one first shows that the first moment is large (as implied by Lemma~\ref{lem-match-probability} below) and then one needs to employ a second moment method. However, the implementation of a standard second moment method or even a second moment method with truncation would be quite challenging (if possible at all). This is because in our case by a first moment computation the size of desirable $V'_{\mathsf 1}$ and $V'_{\mathsf 2}$ has to be larger than $n^{2 - \epsilon'}$ for some $\epsilon'>0$, and as a result a typical pair of sets would have significant overlap which results in significant amount of correlation. However, we manage to get around this challenge since we are flexible with the size of $V'_{\mathsf 1}$ and $V'_{\mathsf 2}$, that is, we only need to show for some $M'$ (but not a fixed $M'$) there exists a desired pair of $V'_{\mathsf 1}$ and $V'_{\mathsf 2}$ of size $M'$. The key novelty in our proof lies in the definition of $\chi_{\mathbf L}$ and $\mathcal E_{\mathsf k}(U, \mathbf L)$ (see \eqref{eq-chi-mathbf-L}). We first show in Lemma~\ref{lem-force-overlap} that for a typical $\mathbf L$ we must have $\chi_{\mathbf L} = 1-o(1)$ since otherwise we would have the second moment of a random variable smaller than the square of its first moment. Once we show  $\chi_{\mathbf L} = 1-o(1)$, it is straightforward to derive Proposition~\ref{prop-match-empirical-profile}. Next, we carry out the proof details according to this outline.

For any $m\geq 1$ and $U\subset \Lambda'_n$ with $|U| = m$,  we let $\mathfrak L(U)$ be the space of all realizations for $\mathcal L(U)$. In addition, we assume that $U\subset_{2r} \Lambda'_n$, i.e., $U \subset \Lambda'_n$ is a set which has pairwise $\ell^\infty$-distance at least $2r$ (so $R_j(v)$ and $R_{j'}(u)$ are disjoint for different $u,v \in U$ and for all $1\leq j, j'\leq r^d$). In this way, the law of $\mathcal L(U)$ does not depend on the particular choice of $U$ except through $|U|$, and thus we can write $\mathfrak L(U) = \mathfrak L(|U|) = \mathfrak L(m)$ for simplicity. We write $\mu_U = \mu_{|U|}$ for the probability measure of $\mathcal L(U)$ on $\mathfrak L(U)$. For any $\iota >0$, we define
\begin{equation}\label{eq-def-chi-m}
\mathfrak L(U, \iota) = \mathfrak L(|U|, \iota) = \{\mathbf L\in \mathfrak L(U): \mu_U(\mathbf L) \geq \iota\}\,.
\end{equation}
For $\delta = \epsilon/100$, we let $M = n^{d(1 - \delta)}$. The very basic intuition behind Proposition~\ref{prop-match-empirical-profile} is encapsulated in the following lemma, since it implies heuristically that most of $\mathbf L\in \mathfrak L(M)$ should appear.
\begin{lemma}\label{lem-match-probability}
For $\iota_* = e^{-n^{d(1-\epsilon/2)}}$, we have $\mu_M(\mathfrak L(M, \iota_*)) = 1-o(1)$.
\end{lemma}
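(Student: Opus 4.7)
The plan is to deduce Lemma~\ref{lem-match-probability} from a trivial pigeonhole (``total support is not too big'') argument calibrated against the choice of $\iota_*$. By definition, every atom $\mathbf L\in\mathfrak L(M)\setminus\mathfrak L(M,\iota_*)$ has $\mu_M(\mathbf L)<\iota_*$, and hence
\begin{equation*}
\mu_M\bigl(\mathfrak L(M)\setminus\mathfrak L(M,\iota_*)\bigr)\;\le\;|\mathfrak L(M)|\cdot\iota_*.
\end{equation*}
It therefore suffices to show $|\mathfrak L(M)|\cdot\iota_*=o(1)$.

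First I would bound $|\mathfrak L(M)|$ by the crudest possible count. Each $\mathcal L_j(U)$ is a vector of nonnegative integer counts with $q^{r^d-1}$ coordinates, each bounded by $|U|=M$, so $\mathcal L_j(U)$ takes at most $(M+1)^{q^{r^d-1}}$ values; multiplying over the $r^d$ indices $j$ yields $|\mathfrak L(M)|\le(M+1)^{r^d\cdot q^{r^d-1}}$. Under the subcritical assumption \eqref{eq-assume-subcritical} we have $r^d=O(\log n)$, $q^{r^d-1}\le n^{d(1-\epsilon)}$, and $\log(M+1)\le d\log n$, whence
\begin{equation*}
\log|\mathfrak L(M)|\;=\;O\bigl(n^{d(1-\epsilon)}(\log n)^{2}\bigr).
\end{equation*}

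To finish, recall $\iota_*=\exp\bigl(-n^{d(1-\epsilon/2)}\bigr)$ and observe that $n^{d(1-\epsilon/2)}/\bigl(n^{d(1-\epsilon)}(\log n)^{2}\bigr)=n^{d\epsilon/2}/(\log n)^{2}\to\infty$, so
\begin{equation*}
|\mathfrak L(M)|\cdot\iota_*\;\le\;\exp\!\Bigl(O\bigl(n^{d(1-\epsilon)}(\log n)^{2}\bigr)-n^{d(1-\epsilon/2)}\Bigr)\;=\;o(1),
\end{equation*}
as required. There is no substantive obstacle here: Lemma~\ref{lem-match-probability} is a calibration statement whose entire purpose is to record that almost all empirical profiles are non-rare, so that the second-moment argument in Lemma~\ref{lem-force-overlap} (the genuinely novel step) can subsequently force $\chi_{\mathbf L}=1-o(1)$. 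The only thing that must be checked is that the exponent $d(1-\epsilon/2)$ in $\iota_*$ is strictly larger than the exponent $d(1-\epsilon)$ controlling $\log|\mathfrak L(M)|$, which holds by construction.
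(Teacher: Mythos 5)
Your proposal is correct and follows essentially the same route as the paper: bound $|\mathfrak L(M)|$ by $(M+1)^{r^d q^{r^d-1}}$, multiply by $\iota_*$, and check that $\log|\mathfrak L(M)| = o\bigl(n^{d(1-\epsilon/2)}\bigr)$ using \eqref{eq-assume-subcritical}. The only cosmetic difference is that you keep the $(\log n)^2$ factor explicit where the paper absorbs it into the exponent $n^{d(1-9\epsilon/10)}$.
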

\begin{proof}
For $U\subset_{2r} \Lambda'_n$ of cardinality $M$, we see that $0\leq \mathcal L_{j, s}(U) \leq M$ for all $1\leq j\leq r^d, 1\leq s\leq q^{r^d-1}$ and thus $|\mathfrak L(M)| \leq (M+1)^{r^d q^{r^d-1}} \stackrel{\Delta}{=} K$. Therefore, 
\[ \mu_M(\mathfrak L(M) \setminus \mathfrak L(M, \iota_*)) \leq \iota_* K\,, \]
and it remains to check that $\iota_*=o(1/K)$.
Simple algebraic manipulations yield that
$$\log K=r^d q^{r^d-1} O(\log n)
\le O(1) n^{d(1-9\epsilon/10)} = o(\log \iota_*^{-1})\,,$$
as required.
\end{proof}
For $U_{\mathsf k} = (2r)\mathbb Z^d\cap V_{\mathsf k}$, we have that $|U_{\mathsf k}|$ is a binomial random variable where the number of trials is at least $((n-2r)/2r)^d$ and the success probability is $1/q$. Thus, applying concentration inequality for binomial variables we get that
\begin{equation}\label{eq-typical-U-k}
\P(|U_{\mathsf k}| \geq N \mbox{ for all } \mathsf k = \mathsf 1,\ldots, \mathsf q) \geq 1-n^{-4} \,, \mbox{ where } N  \stackrel{\Delta}{=} \frac{n^d}{q(16r)^d}\,.
\end{equation} 
Without loss of generality we can assume that $|U_{\mathsf k}| = N$ for $\mathsf k = \mathsf 1, \ldots, \mathsf q$  because if not, we can simply take a subset with cardinality $N$ and name it as $U_{\mathsf k}$. For $U\subset U_{\mathsf k}$ and $\mathbf L\in \mathfrak L(U)$, define
\begin{equation}\label{eq-chi-mathbf-L}
\chi_{\mathbf L} = \P(\mathcal E_{\mathsf k}(U, \mathbf L)\mid \mathcal L(U) = \mathbf L)\,,
\end{equation}
where
\[\mathcal E_{\mathsf k}(U, \mathbf L) =  \bigcup_{W: W \subset U_{\mathsf k}, W \neq U, W\cap U \neq \emptyset, |W| = |U|} \{\mathcal L(W) = \mathbf L\}\,. \]
Note that on $\mathcal E_{\mathsf k}(U, \mathbf L)$ and $\mathcal L(U)=\mathbf L$, there exist $\emptyset \neq U'\subset U$ and $W'\subset U_{\mathsf k}\setminus U$ such that $\mathcal L(U') = \mathcal L(W')$ (we can simply take $U' = U \setminus W$ and $W' = W \setminus U$ for an arbitrary $W$ that certifies $\mathcal E_{\mathsf k}(U, \mathbf L)$). Since we have assumed that $|U_{\mathsf k}| = N$ for all $\mathsf k$ and $\mathcal L(W')$ does not depend on $\sigma|_{W'}$ for $W' \subset_{2r} \Lambda'_n$, we see that the law of the empirical profiles $\{\mathcal L(W'): W' \subset U_{\mathsf k'}\}$ does not depend on $\mathsf k'$.  In addition, for any $\mathsf k' \neq \mathsf k$, we have that conditioned on $\mathcal L(U) = \mathbf L$, there is a coupling such that  $\{\mathcal L(W'): W' \subset U_{\mathsf k'}\} \supset \{\mathcal L(W'): W' \subset U_{\mathsf k} \setminus U\}$. Therefore,
\begin{equation}\label{eq-lower-bound-prob-match}
\P(\cup_{\emptyset \neq U'\subset U, W' \subset U_{\mathsf k'}}\{\mathcal L(W') = \mathcal L(U')\} \mid \mathcal L(U) = \mathbf L) \geq \chi_{\mathbf L}\,.
\end{equation}

\begin{lemma}\label{lem-force-overlap}
For each $U\in U_{\mathsf k}$ with $|U| = M$ and $\mathbf L \in \mathfrak L(U, \iota_*)$, we have $\chi_{\mathbf L} \geq 1- n^{-1}$.
\end{lemma}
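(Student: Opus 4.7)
The plan is to argue by contradiction: suppose $\chi_{\mathbf L} < 1 - n^{-1}$ for some $U \subset U_{\mathsf k}$ of size $M$ and some $\mathbf L \in \mathfrak L(U, \iota_*)$, aiming to derive an impossible inequality of the form $\E X^2 < (\E X)^2$. Throughout I work conditionally on the high-probability event that $|U_{\mathsf k'}| = N$ for every $\mathsf k'$, or equivalently on $\sigma|_S$ for $S = (2r)\mathbb Z^d \cap \Lambda'_n$; since $U_{\mathsf k} \subset (2r)\mathbb Z^d$, the $r$-boxes around distinct vertices of $U_{\mathsf k}$ are pairwise disjoint, so the remaining randomness (i.i.d.\ uniform labels on the non-$S$ vertices inside these boxes) decouples across different centers.

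The key random variable is $Z = \#\{W \subset U_{\mathsf k} : |W| = M, \mathcal L(W) = \mathbf L\}$. Its mean $\E Z = \binom{N}{M} \mu_M(\mathbf L)$ is super-polynomially large: using $\mu_M(\mathbf L) \geq \iota_* = \exp(-n^{d(1-\epsilon/2)})$ together with $\log\binom{N}{M} \geq M \log(N/M) \gtrsim n^{d(1-\delta)} \log n$ and $\delta = \epsilon/100 < \epsilon/2$, one gets $\log \E Z \gtrsim n^{d(1-\delta)} \log n$. By translation invariance of the i.i.d.\ labels, the conditional probability $\P(\mathcal L(W) = \mathbf L \mid \mathcal L(U) = \mathbf L)$ depends on $W$ only through $k = |W \cap U|$; denote it $p_k$. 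Hence $\E[Z \mid \mathcal L(U) = \mathbf L] = \sum_{k=0}^{M} \binom{M}{k}\binom{N-M}{M-k} p_k$ is the same for every $U$, and summing this identity over $U$ yields $\E Z^2 = \E Z \cdot \E[Z \mid \mathcal L(U) = \mathbf L]$.

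The crucial structural input is that the $k = 0$ (disjoint) contribution $\binom{N-M}{M} \mu_M(\mathbf L)$ is doubly-exponentially smaller than $\E Z$, because $\binom{N-M}{M}/\binom{N}{M} \leq e^{-M^2/N}$ with $M^2/N \to \infty$. Consequently almost all of $\E[Z \mid \mathcal L(U) = \mathbf L]$ is carried by the overlapping contribution; let $E_Y$ denote $\E[Y \mid \mathcal L(U) = \mathbf L]$, where $Y$ counts overlapping matches different from $U$. By Cauchy--Schwarz, $E_Y^2 = \E[Y\, \mathbf 1_{Y \geq 1} \mid \mathcal L(U) = \mathbf L]^2 \leq \chi_{\mathbf L} \cdot \E[Y^2 \mid \mathcal L(U) = \mathbf L]$, so the assumption $\chi_{\mathbf L} < 1 - n^{-1}$ forces the strict inequality $\E[Y^2 \mid \mathcal L(U) = \mathbf L] > (1 + n^{-1}) E_Y^2$.

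The main obstacle is then to produce the matching upper bound $\E[Y^2 \mid \mathcal L(U) = \mathbf L] \leq (1 + o(n^{-1})) E_Y^2$, which would contradict the above and close the argument. Expanding this second moment as a sum over ordered pairs $(W_1, W_2)$ of overlapping matches indexed by the joint intersection pattern with $U$ and with each other, each summand reduces --- thanks again to the decoupling of $r$-boxes across distinct centers in $(2r)\mathbb Z^d$ --- to a convolution-type expression $\mu_M(\mathbf L)^{-1} \sum_{\mathbf L_I} \mu_{|I|}(\mathbf L_I)^2 \mu_{M-|I|}(\mathbf L - \mathbf L_I)$ for each relevant overlap $I$. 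Pairs with $W_1 \cap W_2 \subset U$ factorize to give $p_{k_1} p_{k_2}$ and reproduce $E_Y^2$, while pairs with additional mutual overlap between $W_1$ and $W_2$ are combinatorially suppressed by the same $e^{-M^2/N}$-type rarity bound and contribute only lower-order corrections; balancing these contributions carefully should deliver the desired contradiction.
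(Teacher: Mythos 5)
Your setup is on the right track --- argue by contradiction, introduce a counting variable over size-$M$ subsets of $U_{\mathsf k}$, and exploit $\binom{N-M}{M}/\binom{N}{M}\le e^{-cM^2/N}$ with $M^2/N\to\infty$ --- but there is a genuine gap at exactly the step you yourself flag as ``the main obstacle'': the bound $\E[Y^2\mid \mathcal L(U)=\mathbf L]\le (1+o(n^{-1}))(\E[Y\mid\mathcal L(U)=\mathbf L])^2$ is never established, and the heuristic you offer for it points the wrong way. Since $M=n^{d(1-\delta)}$ and $N\asymp n^d$, a typical pair $(W_1,W_2)$ of size-$M$ subsets of $U_{\mathsf k}$ has $|W_1\cap W_2|\asymp M^2/N\to\infty$; it is the \emph{disjoint} (or small-overlap) pairs that are combinatorially rare, by the very factor $e^{-cM^2/N}$ you quote, not the overlapping ones. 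For heavily overlapping pairs the joint probability of $\{\mathcal L(W_1)=\mathbf L\}\cap\{\mathcal L(W_2)=\mathbf L\}$ can vastly exceed the product of marginals, all the more so because $\mu_M(\mathbf L)$ may be as small as $\iota_*=e^{-n^{d(1-\epsilon/2)}}$; and even pairs with $W_1\cap W_2\subset U$ do not factorize as $p_{k_1}p_{k_2}$ under the conditioning on $\mathcal L(U)=\mathbf L$, since that conditioning fixes only an empirical profile and hence correlates the box-configurations attached to different subsets of $U$. This is precisely the ``significant overlap implies significant correlation'' obstruction that the paper states makes a standard or truncated second moment method on the match count ``quite challenging (if possible at all)''; reducing the lemma to an ultra-sharp $1+o(n^{-1})$ second-moment identity for $Y$ is therefore not a reduction to something easier, and is quite plausibly false as stated.

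The paper's proof sidesteps this with one idea your proposal is missing: it counts only those $U$ for which $\mathcal E_{\mathsf k}(U,\mathbf L)$ \emph{fails}, i.e.\ $Z_{\mathbf L}=\sum_{U\subset U_{\mathsf k},\,|U|=M}\mathbf 1\{\mathcal L(U)=\mathbf L\}\,\mathbf 1\{(\mathcal E_{\mathsf k}(U,\mathbf L))^c\}$. The assumed $\chi_{\mathbf L}\le 1-n^{-1}$ gives $\E Z_{\mathbf L}\ge \binom{N}{M}\mu_M(\mathbf L)\,n^{-1}$, while in $\E Z_{\mathbf L}^2$ every pair $U\ne U'$ with $U\cap U'\ne\emptyset$ contributes zero \emph{deterministically} (if $U$ is counted, no overlapping $U'$ can satisfy $\mathcal L(U')=\mathbf L$), so $\E Z_{\mathbf L}^2\le \binom{N}{M}\binom{N-M}{M}\mu_M(\mathbf L)^2+\binom{N}{M}\mu_M(\mathbf L)$. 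The two facts you already have, namely $\binom{N}{M}/\binom{N-M}{M}\gg n^2$ and $\binom{N}{M}\mu_M(\mathbf L)\gg n^6$, then force $\E Z_{\mathbf L}^2<(\E Z_{\mathbf L})^2$, which is the desired contradiction, with no probabilistic control of overlapping pairs needed. I suggest reworking your argument around this truncation of the counting variable.
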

\begin{proof}
Suppose otherwise there exist $\mathbf L \in \mathfrak L(U, \iota_*)$ with $\chi_{\mathbf L} \leq 1 - n^{-1}$. Define 
\[ Z_{\mathbf L} = \sum_{U \subset U_{\mathsf k}: |U| = M} \mathbf 1\{\mathcal L(U) = \mathbf L; (\mathcal E_{\mathsf k}(U, \mathbf L))^c\}\,. \]
Since $\chi_{\mathbf L} \leq 1 - n^{-1}$, we see that
\begin{align}
\E Z_{\mathbf L} &\ge \binom{N}{M} \mu_M(\mathbf L) n^{-1} \ge \binom{N}{M} \iota_* n^{-1} \nonumber\\
&\ge \left( \frac{N-M}{M} \right)^M e^{-n^{(1-\epsilon/2)d}} n^{-1} \nonumber \\
&= \left(\frac{n^d/(q(16r)^d)-n^{d(1-\delta)}}{n^{d(1-\delta)}} \right)^{n^{d(1-\delta)}} e^{-n^{(1-\epsilon/2)d}} n^{-1} \nonumber \\
&\ge n^{(d \delta/2) \cdot n^{d(1-\delta)}} e^{-n^{(1-\epsilon/2)d}} n^{-1} \gg n^6\,, \label{eq-Z-L-first-moment-lower}
\end{align}
where the second inequality follows from $\mathbf L\in \mathfrak L(U, \iota_*)$.
In addition, we can compute its second moment as 
\begin{align}
\E Z_{\mathbf L}^2 &= \sum_{U, U' \subset U_{\mathsf k}: |U| = |U'| = M}  \E \mathbf 1\{\mathcal L(U) = \mathbf L; (\mathcal E_{\mathsf k}(U, \mathbf L))^c\}  \mathbf 1\{\mathcal L(U') = \mathbf L; (\mathcal E_{\mathsf k}(U', \mathbf L))^c\} \nonumber\\
&\leq \sum_{U, U' \subset U_{\mathsf k}: |U| = |U'| = M, U\cap U' = \emptyset} \E \mathbf 1\{\mathcal L(U) = \mathbf L\}  \mathbf 1\{\mathcal L(U')  = \mathbf L\} +\sum_{U \subset U_{\mathsf k}: |U|= M} \E \mathbf 1\{\mathcal L(U) = \mathbf L\}  \nonumber \\
&\leq \binom{N}{M} \binom{N-M}{M} (\mu_M(\mathbf L))^2 + \binom{N}{M} \mu_M(\mathbf L)\,, \label{eq-Z-L-second-upper}
\end{align}
where the first inequality follows since on $(\mathcal E_{\mathsf k}(U, \mathbf L))^c$ for any legitimate $U'$ with $U'\cap U \neq \emptyset$ and $U' \neq U$ we have $\mathcal L(U') \neq \mathbf L$. 
Since $\binom{N}{M} \mu_M(\mathbf L)\geq n^7$ and since
\[ \binom{N}{M}/\binom{N-M}{M} \ge \left(\frac{N}{N-M} \right)^M \gg n^2\,,\]  we have that
\[ \binom{N-M}{M} \mu_M(\mathbf L)+1 \ll \binom{N}{M} \mu_M(\mathbf L) n^{-2}\,.\]
Combined with \eqref{eq-Z-L-first-moment-lower} and \eqref{eq-Z-L-second-upper}, it yields that $\E Z_{\mathbf L}^2 \ll (\E Z_{\mathbf L})^2$, arriving at a contradiction and thereby concluding the proof of the lemma.
\end{proof}
\begin{proof}[Proof of Proposition~\ref{prop-match-empirical-profile}]
Take a $U\subset U_{\mathsf 1}$ with $|U| = M$. By Lemma~\ref{lem-match-probability}, with probability $1-o(1)$ we have that $\mathcal L(U) \in \mathfrak L(U, \iota_*)$. By \eqref{eq-lower-bound-prob-match} and Lemma~\ref{lem-force-overlap}, we see that with probability  $1 - o(1)$, there exist $\emptyset \neq U'\subset U$ and $W' \subset U_{\mathsf 2}$ such that $\mathcal L(U') = \mathcal L(W')$. This completes the proof of the proposition.
\end{proof}

\section{Proof of identifiability}\label{sec:identifiability}

In this section we prove identifiability for $d \geq 2$. Recall that in this regime
\begin{equation}\label{eq-assume-supercritical}
q^{r^{d}} \geq n^{d(1 + \epsilon)} \mbox{ for an arbitrary fixed small } \epsilon>0\,.
\end{equation}
We note that the threshold is chosen as in \eqref{eq-assume-supercritical} since this ensures that for  each $B\in \mathfrak B_r$ we have that $\sigma_B$ is unique with probability tending to 1 (see Lemma~\ref{lem-unique-typical})---this is a property we will repeatedly use in our proof.

Our recovering procedure will employ the following three steps to successively \emph{determine} labels on vertices of $\Lambda_n$ (here we say that we determine $\sigma_v = \mathsf k$ if every $\sigma$ with given empirical profile satisfies $\sigma_v = \mathsf k$):\\
\noindent {\bf Step 1: initial labeling at the corner.} Determine labels on $\Lambda_{2r}$.\\
\noindent {\bf Step 2: percolation of unique $(r-1)$-boxes.} Inductively check each \emph{unexplored} box in $\mathfrak B_{r-1}$ (denoted as $B$) where all labels have been previously determined. If $\sigma|_B$ is unique over all $(r-1)$-boxes, we can then find a few boxes $B'\in \mathfrak B_{r}$ so that $\sigma|_{B'}$ agrees with $\sigma|_B$ on a sub-$(r-1)$ box of $B'$ (we can find $2^d$ such $B'$'s unless $B$ is near the boundary of $\Lambda_n$). Therefore, from $\sigma|_{B'}$ we can determine labels on neighboring vertices of $B$ (they may have been determined already) and then we mark $B$ as \emph{explored}.\\
\noindent {\bf Step 3: final step of recovery.} For each vertex $v$ which is not determined after \textbf{Step 2}, check all boxes $B\in \mathfrak B_r$  containing $v$. For each such $B$, let $\mathsf B\subset B$ be the collection of determined vertices in $B$. If $\sigma|_{\mathsf B}$ is unique over all translated copies of $\mathsf B$ (note that this can be checked by scanning through $\{\sigma|_B: B\in \mathfrak B_r\}$), we can then determine labels on $B$ and thus in particular the label on
$v$.\\
In order to justify correctness for each aforementioned step we will use probabilistic arguments to prove some desirable structural properties which hold for a typical labeling configuration. These properties, altogether, will show that eventually we have determined labels on all vertices of $\Lambda_n$. In addition, it is easy from the description of our procedure that the running time is polynomial in $n$.

Next, we provide a proof for correctness of our 3-step procedure while omitting proofs for a few lemmas and propositions (in this way of exposition we hope that this can serve as an overview for our proof that helps a reader to grasp the proof sketch before jumping into details). To this end, we first introduce a few terminologies. For each set $B$, we say $B$ is \emph{unique} if $\sigma|_B$ is unique among $\{\sigma|_{B'}: B' \subset \Lambda_n \mbox{ is a translated copy of } B\}$. In particular,  for each $B \in \mathfrak B_{r-1}$, we say  $B$ is \emph{unique} if $\sigma|_B$ is unique in $\{\sigma|_{B'}: B'\in \mathfrak B_{r-1}\}$. For each $B\in \mathfrak B_{s}$, we say that $B$ is \emph{open} if  each $B'\in \mathfrak B_{r-1}$ that is contained in $B$ is unique. The next lemma formalized the intuition for the choice of threshold in \eqref{eq-assume-supercritical}.
\begin{lemma}\label{lem-unique-typical}
For each $B\in \mathfrak B_{2r}$, we have $\P(B \mbox{ is open}) \geq 1 - n^{-\epsilon/2}$.
\end{lemma}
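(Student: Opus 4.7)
The plan is to union-bound the failure event ``$B$ is not open'' first over the $(r+2)^d$ sub-$(r-1)$-boxes $B' \subset B$, and for each such $B'$ over all other $(r-1)$-boxes $B'' \in \mathfrak B_{n,r-1}$. The central quantity to estimate is therefore $\P(\sigma|_{B'} = \sigma|_{B''})$ for two distinct $(r-1)$-boxes $B', B'' \subset \Lambda_n$ differing by some translation $t \neq 0$.

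The key claim I would prove is that this probability equals exactly $q^{-(r-1)^d}$, uniformly in $t$. The event imposes the $(r-1)^d$ constraints $\sigma(v) = \sigma(v + t)$ for $v \in B'$, which I view as the edges of a graph $G$ on vertex set $B' \cup B''$. Since $v \mapsto v + t$ has infinite order on $\mathbb{Z}^d$, each $+t$-orbit intersects $B' \cup B''$ in a single chain, so $G$ is a disjoint union of paths and hence a forest. Counting gives $|V(G)| = 2(r-1)^d - |B' \cap B''|$, $|E(G)| = (r-1)^d$, and number of components $c = |V(G)| - |E(G)| = (r-1)^d - |B' \cap B''|$. Since $\sigma$ is i.i.d.\ uniform and the constraints force $\sigma$ to be constant on each component, $\P(\sigma|_{B'} = \sigma|_{B''}) = q^{c - |V(G)|} = q^{-(r-1)^d}$, which sanity-checks against the obvious disjoint case.

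Plugging in, assumption \eqref{eq-assume-supercritical} forces $r \to \infty$ with $n$, so $(r-1)^d / r^d = (1 - 1/r)^d \to 1$ and for $n$ large, $(r-1)^d \log q \geq d(1 + \epsilon/2) \log n$, i.e.\ $q^{-(r-1)^d} \leq n^{-d(1+\epsilon/2)}$. Since there are at most $n^d$ choices of $B''$, a union bound gives $\P(B' \text{ not unique}) \leq n^d \cdot n^{-d(1+\epsilon/2)} = n^{-d\epsilon/2}$. A second union bound over the $(r+2)^d = (\log n)^{O(1)}$ sub-$(r-1)$-boxes of $B$, together with $d \geq 2$, then yields $\P(B \text{ not open}) \leq (r+2)^d n^{-d\epsilon/2} \leq n^{-\epsilon/2}$ for all sufficiently large $n$.

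The one subtle point is extracting the full exponent $(r-1)^d$ uniformly in $t$: the easy bound via independence of $\sigma|_{B' \setminus B''}$ from $\sigma|_{B''}$ only gives $q^{-|B' \setminus B''|}$, which for a nearest-neighbor shift collapses to $q^{-(r-1)^{d-1}} = n^{o(1)}$, hopelessly weak against the $n^d$ union bound over translations. The forest-counting above is precisely what extracts the exponent $(r-1)^d$ matching the threshold in \eqref{eq-assume-supercritical}.
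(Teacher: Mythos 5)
Your proof is correct and follows essentially the same route as the paper's: a union bound over the $(r+2)^d$ sub-$(r-1)$-boxes of $B$ and the at most $n^d$ translated copies, driven by the key estimate $\P(\sigma|_{B'}=\sigma|_{B''})=q^{-(r-1)^d}$. The only difference is that you spell out the orbit/forest argument justifying this probability for overlapping boxes, a step the paper leaves implicit here (the same cycle-free counting appears explicitly later, in the proof of Lemma~\ref{lem-two-intersecting-pairs}).
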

Uniqueness is useful due to the following lemma.
\begin{lemma}\label{lem-successively-determine}
For $s \geq r$, if $B\in \mathfrak B_s$ is open and labels on an $(r-1)$-sub-box of $B$ are determined, then there is a polynomial time algorithm which determines labels on $B$.
\end{lemma}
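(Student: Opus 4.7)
The strategy is to grow the region of determined labels within $B$ one $r$-sub-box at a time, using openness of $B$ to turn matches in the empirical profile into unambiguous location data. Since $B$ is open, each $(r-1)$-sub-box of $B$ is unique in $\{\sigma|_{B'} : B'\in \mathfrak B_{r-1}\}$, so whenever we know the labels on an $(r-1)$-sub-box $B'\subset B$ we can search the profile $\{\sigma|_{\tilde B} : \tilde B\in \mathfrak B_r\}$ for $r$-boxes whose configuration contains $\sigma|_{B'}$ as a sub-configuration sitting at some specified corner. By uniqueness of $B'$, every such match corresponds, via the corner position and the known physical location of $B'$ in $\Lambda_n$, to a genuine $r$-box of $\Lambda_n$ that physically contains $B'$, and the match reveals all labels on that $r$-box.

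First I apply this observation to the given $(r-1)$-sub-box $B_0 \subset B$. Since $s \geq r$, at least one of the (at most $2^d$) $r$-boxes of $\Lambda_n$ containing $B_0$, call it $R_0$, sits inside $B$, and the search above recovers $\sigma|_{R_0}$. Next I iterate: maintaining a determined region $D \subset B$ that contains at least one $r$-sub-box of $B$, as long as $D \neq B$ I pick an $r$-sub-box $R\subset B$ whose intersection with $D$ contains an $(r-1)$-sub-box $B'$ of $B$ and such that $R\not\subset D$. Such an $R$ exists because the $r$-sub-boxes of $B$ form a connected grid in which adjacent boxes (differing by a unit shift along some coordinate axis) overlap in a slab containing an $(r-1)$-sub-box, so starting from the previously obtained $R_t \subset D$ we can reach any vertex in $B \setminus D$ by a sequence of such unit shifts. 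Applying the same search with $B'$ in place of $B_0$ then recovers $\sigma|_R$ and strictly enlarges $D$.

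Since $|B| = s^d \leq n^d$ and each iteration adds at least one new vertex to $D$ while requiring only a polynomial-time scan of the empirical profile, the procedure terminates in polynomial time with $D = B$. The only bookkeeping point to verify is that every $(r-1)$-sub-box used to drive the search lies inside $B$, so that openness of $B$ supplies the needed uniqueness; this is guaranteed by always choosing the overlap $R \cap D$ inside $B$. I do not foresee a substantive obstacle beyond this; the entire content of the lemma is to exploit uniqueness of $(r-1)$-sub-boxes locally to propagate determination from any seed $(r-1)$-sub-box across the whole of $B$.
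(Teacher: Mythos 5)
Your proposal is correct and follows essentially the same route as the paper: both arguments propagate the determined region across $B$ by using the uniqueness of $(r-1)$-sub-boxes of $B$ (guaranteed by openness) to match a determined $(r-1)$-box against the empirical profile and thereby unambiguously read off the labels of an adjacent $r$-box. The only cosmetic difference is that the paper grows the region one vertex at a time (a vertex adjacent to a determined unique $(r-1)$-box) whereas you grow it one $r$-box at a time along the grid of $r$-sub-boxes of $B$; the key mechanism is identical.
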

We are now ready to prove the correctness of {\bf Step 1}, as formulated in the next proposition.
\begin{proposition}\label{prop-Step-1}
On the event that $\Lambda_{2r}$ is open (which happens with probability tending to 1 by Lemma~\ref{lem-unique-typical}), we can determine labels on $\Lambda_{2r}$.
\end{proposition}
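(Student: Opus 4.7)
The plan is to first identify $\sigma|_{\Lambda_r}$ directly from the empirical profile via a count criterion, and then to apply Lemma~\ref{lem-successively-determine} with $B = \Lambda_{2r}$ to extend the determination to all of $\Lambda_{2r}$.

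For the first step I would proceed as follows. For each $r$-configuration $c$ appearing in the profile, let $c_0$ denote its restriction to $\Lambda_{r-1}$ (its ``origin-corner $(r-1)$-sub-configuration''), and let $N(c_0)$ be the number of $r$-boxes of $\Lambda_n$ (with multiplicity) whose configuration contains $c_0$ as an $(r-1)$-sub-configuration at any of the $2^d$ sub-positions; this quantity is directly readable from the profile. The algorithm outputs the unique $c$ satisfying $N(c_0) = 1$. Existence of such a $c$ and the identification $c = \sigma|_{\Lambda_r}$ follow from openness of $\Lambda_{2r}$: taking $c_0 = \sigma|_{\Lambda_{r-1}}$, openness makes $c_0$ unique among all $(r-1)$-box configurations of $\Lambda_n$, and $\Lambda_{r-1}$ sits at the origin corner and so is contained in only the single $r$-box $\Lambda_r$, giving $N(c_0) = 1$. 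Conversely, for any $r$-box $B$ with corner $a \ne 0$, its origin-corner $(r-1)$-sub-box at position $a$ (carrying the corresponding $c_0$) is contained in at least two $r$-boxes---shifting by $-e_i$ along any coordinate $i$ with $a_i \ge 1$ keeps the corner in $[0, n-r]^d$---forcing $N(c_0) \ge 2$, so no other $c$ satisfies the count-$1$ property.

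The delicate point is to verify the paper's strict definition of ``determine'', namely that every $\widetilde\sigma$ producing the same profile also satisfies $\widetilde\sigma|_{\Lambda_r} = c$. For such $\widetilde\sigma$, the $r$-box realizing $c$ must be unique (otherwise $N(c_0) \ge 2$) with some corner $a' \in [0, n-r]^d$; its origin-corner $(r-1)$-sub-box lies at position $a'$ and carries configuration $c_0$. For $N(c_0) = 1$ to hold, no other $r$-box of $\Lambda_n$ can contain this $(r-1)$-box, which forces $a'_i - 1 \notin [0, n-r]$ for every $i$; since $a' \in [0, n-r]^d$, this compels $a'_i = 0$ for every $i$, hence $a' = 0$ and $\widetilde\sigma|_{\Lambda_r} = c$. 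I expect this verification to be the main obstacle; the geometric crux is that $r$-box corners live in $[0, n-r]^d$, so the origin is the unique corner of this cube from which every $-e_i$ shift exits the range, which is precisely what rules out the other $2^d - 1$ corners of $\Lambda_n$ as candidate locations. Once $\sigma|_{\Lambda_r}$ (and in particular $\sigma|_{\Lambda_{r-1}}$) has been determined, Lemma~\ref{lem-successively-determine} applied with $B = \Lambda_{2r}$ (open by hypothesis) and the determined sub-box $\Lambda_{r-1}$ yields $\sigma|_{\Lambda_{2r}}$, completing the proof.
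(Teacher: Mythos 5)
Your argument is correct and follows essentially the same route as the paper: both proofs pin down the configuration at the origin corner by exploiting that, thanks to openness (hence uniqueness of $\sigma|_{\Lambda_{r-1}}$ among all $(r-1)$-boxes) and the corner geometry of $[0,n-r]^d$, the $(r-1)$-box $\Lambda_{r-1}$ is contained in exactly one $r$-box while every other $(r-1)$-box position admits a shift; then both conclude via Lemma~\ref{lem-successively-determine}. The only cosmetic difference is the test used: the paper checks that $c_0$ occurs as the smallest-corner sub-configuration of some $r$-box but never at the other $2^d-1$ sub-positions, whereas you check that the total multiplicity of $c_0$ over all sub-positions equals one.
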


We now turn to {\bf Step 2}.  Let $\mathcal B_{2r}$ be the collection of boxes in $\mathfrak B_{2r}$ where each coordinate of the largest corner vertex is  either equal to $n$ or of the form $kr + (r-1)$ for some integer $k$. Essentially, $\mathcal B_{2r}$ is a disjoint partition of $\Lambda_n$ into $(2r)$-boxes together with $(2^{d}-1)$ shifts of the partition (where each coordinate is either shifted by $r$ or not shifted). For $B, B'\in \mathcal B_{2r}$, we say that $B$ is strongly neighboring to $B'$ if $|B \cap B'| \geq r^d$ and we say $B$ is weakly neighboring to $B'$ if $ \min_{u\in B, v\in B'}|u-v|_{\infty} \leq 4r$. Let $\mathcal C_{2r}$ be the collection of $B\in \mathcal B_{2r}$ that is connected to $\Lambda_{2r}$ via a sequence of strongly neighboring and open boxes in $\mathcal B_{2r}$. We say $B$ is weakly connected to $B'$ if there is a weakly neighboring sequence of boxes in $\mathcal B_{2r}$ joining $B$ and $B'$.  The following percolation type of result is the key input for analyzing {\bf Step 2}.
\begin{proposition}\label{prop-Step-2}
With probability tending to 1, each weakly connected component in $\mathcal B_{2r} \setminus \mathcal C_{2r}$ has diameter at most $\kappa r$ where $\kappa = \kappa(d, \epsilon)$ is independent of $n$. In addition, for any $\kappa' = \kappa'(d, \epsilon)$ with probability tending to 1 we have that 
$$\mathrm{Corner} \stackrel{\Delta}{=}\{v\in \Lambda_n: v_i\not\in (\kappa' r, n- \kappa' r) \mbox{ for } 1\leq i\leq d\} \subset \mathcal C_{2r}\,.$$
\end{proposition}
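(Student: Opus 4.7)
I would prove Proposition~\ref{prop-Step-2} by a Peierls-type subcritical percolation argument on $\mathcal B_{2r}$, using Lemma~\ref{lem-unique-typical} as the base bound $\P(B \text{ closed}) \leq n^{-\epsilon/2} = o(1)$. Since this bound tends to $0$, closed boxes form a deeply subcritical process, and closed clusters should be small with high probability. The principal obstacle — and the reason we cannot apply textbook percolation directly — is that closedness is a \emph{global} event: whether an $(r-1)$-sub-box is unique depends on labels throughout $\Lambda_n$, so closedness events across distant boxes are not independent.

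The main technical step is a joint tail estimate: for any boxes $B_1, \ldots, B_k \in \mathcal B_{2r}$ that are pairwise at $\ell^\infty$-distance at least $10r$, I would show that
\[ \P(B_1, \ldots, B_k \text{ are all closed}) \leq (Cn^{-\epsilon/2})^k \]
for some constant $C = C(d)$. To prove this, for each closed $B_i$ I would choose a \emph{closure certificate} consisting of an $(r-1)$-sub-box $C_i \subset B_i$ and a distinct $(r-1)$-box $C_i' \subset \Lambda_n$ with $\sigma|_{C_i} = \sigma|_{C_i'}$. Enumerating over the $O(n^d)$ candidate positions of each $C_i'$, the per-collision probability $q^{-(r-1)^d}$ combined with \eqref{eq-assume-supercritical} gives the single-box factor $n^{-\epsilon/2}$. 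The pairwise separation $\geq 10r$ forces the certificates to rest on disjoint sets of labels, modulo a bounded number of overlap cases in which some $C_i'$ happens to lie near another $B_j$ or $C_j'$ (these are absorbed into $C$), so the collision events may be treated as independent and their probabilities multiply.

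With the joint tail in hand, the Peierls step is routine. A weakly connected closed component of diameter at least $\kappa r$ must contain a chain of $\Omega(\kappa)$ weakly neighboring closed boxes, and a thinning procedure extracts from such a chain a skeleton of $k = \Omega(\kappa)$ closed boxes pairwise at distance $\geq 10r$. The number of such skeletons rooted at a fixed box is at most $K^k$ for some $K = K(d)$. Summing over the $O(n^d)$ possible roots,
\[ \P\bigl( \exists \text{ closed weakly-connected component of diameter} \geq \kappa r \bigr) \leq O(n^d) \cdot K^k \cdot (Cn^{-\epsilon/2})^k, \]
which is $o(1)$ as soon as $\kappa = \kappa(d, \epsilon)$ is taken large enough to make the $n$-exponent strictly negative.

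For the second assertion, Lemma~\ref{lem-unique-typical} ensures $\Lambda_{2r}$ is open with probability tending to $1$, so $\Lambda_{2r} \in \mathcal C_{2r}$. Granted the first assertion, standard duality/isoperimetric reasoning in $d \geq 2$ (any finite open strong-component must be enclosed by a weakly-connected closed cluster of at least comparable diameter) implies that every open strong-component other than $\mathcal C_{2r}$ itself has diameter $O(\kappa r)$, so $\mathcal C_{2r}$ is the unique giant open strong-component. A direct union bound over $v \in \mathrm{Corner}$ then finishes the proof: the failure event $v \notin \bigcup_{B \in \mathcal C_{2r}} B$ requires the $2^d$ boxes of $\mathcal B_{2r}$ containing $v$ to lie in closed or small-trapped-open clusters whose total weakly-connected extent reaches across to separate them from $\Lambda_{2r}$, which forces a closed weakly-connected cluster of diameter at least $\kappa r$ (after choosing $\kappa$ sufficiently large in terms of $\kappa'$ and $d$) and contradicts the first assertion.
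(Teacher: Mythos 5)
Your overall architecture (coarse graining plus a Peierls sum, seeded by Lemma~\ref{lem-unique-typical}) is the same as the paper's, but the key technical step as you describe it does not go through. The joint tail bound $\P(B_1,\ldots,B_k \text{ all closed}) \leq (Cn^{-\epsilon/2})^k$ cannot be obtained by ``treating the collision events as independent'' and absorbing overlaps into a constant $C$: the overlap configurations are not bounded in number (each of the $k$ phantoms $C_i'$ ranges over $\Theta(n^d)$ positions and may land inside any other $B_j$ or on top of any other $C_j'$), and in degenerate configurations the joint collision probability genuinely fails to factorize --- e.g.\ if $C_1'\subset B_2$ and $C_2'\subset B_1$ the two collision constraints can coincide, so the probability is $q^{-(r-1)^d}$ rather than $q^{-2(r-1)^d}$. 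If you then pay the full $n^{dk}$ enumeration for the phantoms while only collecting $q^{-\lceil k/2\rceil (r-1)^d}$ in probability, the Peierls sum does not close. The correct accounting, which is what the paper does in Lemma~\ref{lem-S-closed}, is to pass to an \emph{acyclic} subfamily of collision edges of size $\lfloor k/2\rfloor$ (always extractable because each box contributes one edge, so the only cycles are double edges and genuine cycles, from which at least half the edges survive), observe that each edge of a forest provably cuts the number of consistent configurations by a factor $q^{(r-1)^d}$, and enumerate \emph{only} the phantoms attached to those forest edges. One loses a factor $2$ in the exponent, which is harmless, but some such device is indispensable; as written your step would fail.

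There are two further gaps in the reduction. First, the first assertion concerns weakly connected components of $\mathcal B_{2r}\setminus\mathcal C_{2r}$, whose boxes need not be closed: a huge component of non-$\mathcal C_{2r}$ boxes can be produced by a \emph{small} closed cluster that, together with $\partial_{4r}\Lambda_n$, encloses the seed $\Lambda_{2r}$ near the origin corner. Your Peierls bound on ``closed components of diameter $\geq \kappa r$'' does not cover this case; one needs the dual dichotomy the paper sets up in \eqref{eq-enclose-Lambda-2r}--\eqref{eq-enclose-v}, where the seed-enclosing case is handled by noting that such a cluster of size $t$ must start within $O(rt)$ of $\Lambda_{2r}$, so the root enumeration is $O(t^d)$ rather than $O(n^d)$. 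Second, for $\mathrm{Corner}\subset\mathcal C_{2r}$ your claim that failure ``forces a closed cluster of diameter at least $\kappa r$'' is false near $\partial\Lambda_n$: a single closed $2r$-box sitting at a far corner of $\Lambda_n$ can weakly separate a corner vertex from $\Lambda_{2r}$ with the help of the boundary. The correct argument is again a union bound exploiting that the separating cluster $S$ satisfies $|S|\geq \min_{w\in V(S)}|v-w|_\infty/(100r)$ and that $|\mathrm{Corner}|=O((\kappa'r)^d)$ is polylogarithmic, so even clusters of size $1$ contribute only $o(1)$.
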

By Lemma~\ref{lem-successively-determine}, each vertex in $\mathcal C_{2r}$ was determined in {\bf Step 2} and thus by Proposition~\ref{prop-Step-2}, 
\begin{equation}\label{eq-step-2-good-property}
\mathcal B_{2r}\setminus \mathcal C_{2r}  \mbox{ consists of weakly connected components with diameter } \leq \kappa r\,.
\end{equation} 
This will be a very useful input for {\bf Step 3}, as implied by the next proposition. For $v\in \Lambda_N$ with $v_1,v_2,\ldots,v_d \leq n-r$ and $v_2 \geq r$, for $s =0, \ldots, r-1$ define
\begin{equation}\label{eq-def-B-k-v}
B_s(v) = (v_1 + 1, v_2 - s, v_3, \ldots, v_d) + [0, r-2] \times [0, r-1]^{d-1}
\end{equation}
to be a collection of rectangles (in fact each rectangle is almost an $r$-box) containing $v$ which are fully contained in $\Lambda_n$. (We made this particular choice so that in our application later all vertices in $B_k(v)$ have been determined in {\bf Step 2}.)
\begin{proposition}\label{prop-Step-3}
We have that
$$\P(\exists v\in \Lambda_n: B_s(v) \mbox{ is not unique for all } 0\leq s\leq r-1) = o(1)\,.$$
\end{proposition}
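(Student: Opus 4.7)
The plan is to fix $v \in \Lambda_n$, establish $\P(A_v) = o(n^{-d})$ where $A_v$ is the event that $B_s(v)$ is not unique for every $0 \le s \le r - 1$, and then conclude by union bounding over $v$. The crucial geometric observation is that the $r$ rectangles $B_s(v)$ are all nested inside a single box $\mathcal V(v) := \bigcup_{s=0}^{r-1} B_s(v)$ of shape $(r-1) \times (2r-1) \times r^{d-2}$, with consecutive rectangles sharing an overlap $B_s(v) \cap B_{s+1}(v)$ of $(r-1)^2 r^{d-2}$ vertices---both of order $r^d(1-o(1))$, noting that $r \to \infty$ with $n$ by \eqref{eq-assume-supercritical}.

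The event $A_v$ unpacks as: for each $s$ there exists a witness $t_s \ne 0$ with $B_s(v) + t_s \subset \Lambda_n$ and $\sigma|_{B_s(v)} = \sigma|_{B_s(v) + t_s}$. A union bound over witness tuples $(t_0, \ldots, t_{r-1})$ yields $\P(A_v) \le \sum_{(t_s)} q^{-N(t_0, \ldots, t_{r-1})}$ where $N$ is the rank of the joint label-equality system. I organize the sum by the number $k$ of distinct witness values. The aligned case $k = 1$ is the dominant one: with $t_s = t$ the joint constraint collapses to $\sigma|_{\mathcal V(v)} = \sigma|_{\mathcal V(v) + t}$, of rank $\ge |\mathcal V(v)|$ when $\mathcal V(v) + t$ is disjoint from $\mathcal V(v)$. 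Using $q^{r^d} \ge n^{d(1+\epsilon)}$ and $(2r-1)(r-1)/r^2 \to 2$, this gives $q^{|\mathcal V(v)|} \ge n^{2d(1+\epsilon) - o(1)}$, so summing over the $\le n^d$ eligible translates contributes $n^{-d(1+2\epsilon) + o(1)} = o(n^{-d})$. Secondary contributions from $t$'s with $(\mathcal V(v) + t) \cap \mathcal V(v) \ne \emptyset$, and from ``internal'' witnesses (some $t_s$ along the coord-$2$ axis placing $B_s(v) + t_s \subset \mathcal V(v)$, of which there are only $O(r)$ choices per $s$), each yield at most $q^{-(r-1)r^{d-1}} \le n^{-d(1+\epsilon)(1-1/r)} = o(n^{-d})$ up to polynomial prefactors, and are absorbed.

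The main obstacle is the sum over tuples with $k \ge 2$, of which there are as many as $n^{rd}$. The key mechanism is that each additional distinct witness value incurs an extra rank penalty of order $(r-1)^2 r^{d-2} \sim r^d$: if $t_s \ne t_{s+1}$ then the overlap $B_s(v) \cap B_{s+1}(v)$ is forced to appear at three mutually distinct translated positions simultaneously. Grouping $(t_s)$ into maximal runs of consecutive indices with equal witness, a generic disjoint configuration with $k$ runs has rank $\sim (k+1) r^d$, giving per-tuple probability $\le n^{-(k+1)d(1+\epsilon) + o(1)}$; since the number of $k$-run tuples is at most $r^{k-1} n^{kd}$, each $k$-level contributes $\le r^{k-1} n^{-d - (k+1)d\epsilon + o(1)}$, and summing the geometric series over $k \ge 1$ preserves the $o(n^{-d})$ bound. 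The delicate technical point will be validating the rank lower bound in configurations where different run-translates overlap; there the naive rank bound drops, but the combinatorial count of such tuples is correspondingly smaller, and a careful case analysis closes the argument.
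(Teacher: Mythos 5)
Your overall strategy is the same as the paper's: decompose the witness translations $(t_0,\dots,t_{r-1})$ into maximal runs of equal value, observe that $\ell$ runs force a label-equality system of rank roughly $(\ell+1)r^d$ (the paper's count is $|\mathcal E|=\sum_{i}(r-1)r^{d-2}(r+|I_i|-1)\ge (r-1)^d(1+\ell)$) against only about $r^{\ell-1}n^{d\ell}$ choices of tuples, and sum the resulting series using \eqref{eq-assume-supercritical} and $r\to\infty$. Your treatment of the aligned case and of the "internal" witnesses is fine (indeed, for a single translation the constraint graph is always acyclic, so those rank bounds hold with or without overlap).

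The genuine gap is the point you defer: validating the rank bound when the translated runs overlap one another or overlap $\mathcal V(v)$, for $\ell\ge 2$. Your claim that "the combinatorial count of such tuples is correspondingly smaller" is not automatic and is exactly the non-routine step. When two runs have translations $t_j\neq t_{j'}$ with $at_j+bt_{j'}=0$ realizable inside the constraint sets, the graph acquires up to order $r^d$ independent cycles and the rank can drop by a full $r^d$; the fallback bound of rank $\approx r^d$ coming from a single run loses badly to the $n^{d\ell}$ enumeration, so one must genuinely prove that every rank-degenerate configuration confines the offending translations to a set of size $n^{o(1)}$, across all mixed cases (some runs near, some far, translates hitting $\mathcal V(v)$ versus hitting each other). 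The paper sidesteps this case analysis entirely with an a priori reduction: by \eqref{eq-unique-not-local} and Lemma~\ref{lem-two-intersecting-pairs}, with probability $1-o(1)$ no two intersecting pairs of matched $(r-1)$-boxes can have distinct offsets, so on that event one may restrict the union bound to configurations satisfying the full disjointness \eqref{eq-disjointness-B-I-j}, where the constraint graph is a forest and the rank count is clean. You would need either to import such a lemma or to actually carry out the overlap case analysis; as written, the argument is not closed.
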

We now explain how Proposition~\ref{prop-Step-3} ensures typically all vertices are determined at the end of {\bf Step 3}. Suppose otherwise there exists a vertex $v$ that is not determined. By \eqref{eq-step-2-good-property}, $v$ is in a weakly connected component of undetermined vertices whose diameter is at most $\kappa r$ and in addition this component is disjoint from $\mathrm{Corner}$ (in our application, when defining $\mathrm{Corner}$ we take $\kappa' = \kappa + 2$). Thus, for each component there exists a vertex $v$ so that one of its coordinates is in $(\kappa' r, n - \kappa'r)$ and we may assume without loss of generality that the second coordinate of $v$ is in $(\kappa'r, n- \kappa' r)$. Since the component has diameter at most $\kappa r$, by our choice of $\kappa'$ we see that the second coordinates for all vertices in this component are  in $( r, n- r)$. We further assume that in this component $v$ has the largest first coordinate and $v_1,v_2,\ldots,v_d \leq n-r$ and $v_2 \geq r$ (the analysis is completely the same by symmetry in other cases). Since $v$ has the largest first coordinate in this component and since all components have mutual $\ell_\infty$-distance at least $4r$ (by our definition of weakly neighboring), we see that all vertices in $\cup_{s=0}^{r-1}B_s(v)$ have been determined. By Proposition~\ref{prop-Step-3}, we see that with probability $1-o(1)$ we have that for all $u$ there exists $B_{s_u}(u)$ which is unique. We then assume without loss of generality that this event occurs. Since vertices in  $B_{s_v}(v)$ have all been determined, we can then check sequentially for $s = 0, \ldots, r-1$ and for each such $s$ we scan through the empirical profile $\{\sigma|_B: B\in \mathfrak B_r\}$ until we find the first $s_v$ satisfying the following property: there is a \emph{unique} $\tilde \sigma \in \{\sigma|_B: B\in \mathfrak B_r\}$ such that when viewed as a labeling configuration on $\Lambda_r$ the labeling configuration of
$\tilde \sigma|_{\Lambda_r \setminus \{ x\in \Lambda_r: x_1 = 0\}}$ agrees with $\sigma|_{B_{s_v}(v)}$.
At this point, from the value of $s_v$ and $\tilde \sigma$ we can determine the label on $v$. This arrives at a contradiction and thus completes the proof of our theorem.

Next, we provide proofs for omitted lemmas and propositions, which are organized into three subsections corresponding to the three steps in our procedure.

\subsection{Proofs for Step 1}

In this subsection, we provide proofs for Lemmas~\ref{lem-unique-typical}, \ref{lem-successively-determine} and Proposition~\ref{prop-Step-1} in order.

\begin{proof}[Proof of Lemma~\ref{lem-unique-typical}]
For any $B\in \mathcal B_{r-1}$, we have
\begin{equation}\label{eq-unique-probability}
\P(B \mbox{ is not unique}) \leq \sum_{B'\in \mathfrak B_{r-1}, B'\neq B} \P(\sigma|_B = \sigma|_{B'}) \leq n^d q^{-(r-1)^{d}}\,,
\end{equation}
where the last inequality follows from \eqref{eq-assume-supercritical}. Therefore, for $B\in \mathfrak B_{2r}$,
$$\P(B \mbox{ is not open }) \leq \sum_{B'\subset B: B'\in \mathfrak B_{r-1}} \P(B' \mbox{ is not unique}) \leq (r+2)^d n^d q^{-(r-1)^{d}}  \leq n^{-\epsilon/2}\,,$$
completing the proof of the lemma.
\end{proof}

\begin{proof}[Proof of Lemma~\ref{lem-successively-determine}]
The proof is similar to and follows that of \cite[Lemma 2.3, Proposition 3.2]{MR19}. It suffices to show that on the event that $B$ is open,  we can determine the label for $v\in B$ if $v$ is neighboring to an $(r-1)$-box $A$ whose vertices have all been determined (and for convenience we assume that $v$ is neighboring to the surface of $A$ with largest first coordinate). To this end, we scan through $\{\sigma|_{B'}: B'\in \mathfrak B_r\}$ and let $\tilde \sigma$ (viewed as a labeling configuration on $\Lambda_{r}$) be the first labeling configuration so that
$\tilde \sigma|_{\{w\in \Lambda_r: w_1 \leq r-2\}} = \sigma|_A$.
Since $A$ is unique by our assumption, such translated copy in $\tilde \sigma$ is unique. As a result, this induces a unique  mapping  $\tau: \Lambda_r \mapsto B$ so that $\tilde \sigma(u) = \sigma(\tau(u))$ for all $u\in \Lambda_r$. By our assumption that  $v$ is neighboring to the surface of $A$ with largest first coordinate, we see that $v\in \tau(\Lambda_r)$ and as a result we can determine the label on $v$.
\end{proof}

\begin{proof}[Proof of Proposition~\ref{prop-Step-1}]
By Lemma~\ref{lem-successively-determine} (and since we assume that $\Lambda_{2r}$ is open), it suffices to show that we can determine labels on $\Lambda_{r-1}$. For convenience, for each $B \in \mathfrak B_r$, we let $B_1,\ldots,B_{2^d}$ be the $(r-1)$-sub-boxes of $B$ that are contained in $B$, and $B_1$ is the one that contains the smallest vertex in $B$. Under the assumption of uniqueness of $\Lambda_r$, we see that $\sigma|_{\Lambda_{r-1}}$ is the unique labeling configuration so that there exists $B \in \mathfrak B_r$ with $\sigma|_{\Lambda_{r-1}}=\sigma|_{B_1}$ but there is no $B \in \mathfrak B_r$ with $\sigma|_{\Lambda_{r-1}}=\sigma|_{B_i}$ for any $i=2,\ldots,2^d$.
\end{proof}

\subsection{Proofs for Step 2}

This subsection is devoted to the proof of Proposition~\ref{prop-Step-2}, which closely resembles the now standard coarse graining method widely used in percolation theory. We say a box in $\mathfrak B_{2r}$ is closed if it is not open. For sets $A, A', S\subset \Lambda_n$, we say that $A$ is weakly separated from $A'$ by $S$ if any path from $A$ to $A'$ has to go through a vertex whose $\ell_\infty$-distance to $S$ is at most $4r$. Similarly, we say that $A$ is weakly enclosed by $S$ if any path from $A$ to $\Lambda_n^c$ has to go through a vertex whose $\ell_\infty$-distance from $S$ is at most $4r$. Write $\partial_{4r} \Lambda_n = \{u\not\in \Lambda_n: |u-v|_{\infty} = 4r \mbox{ for some } v\in \Lambda_n\}$. Let $\mathcal S$ be the collection of components generated by weakly connected closed boxes in $\mathcal B_{2r}$. Note that $S \in \mathcal S$ is a collection of boxes in $\mathcal B_{2r}$, and we denote its vertex set by $V(S) = \cup_{B\in S} B$. 
By duality considerations, we see that if $\mathcal B_{2r} \setminus \mathcal C_{2r}$ has a weakly connected component with diameter $> \kappa r$ then there exists $v\in \Lambda_n \setminus \Lambda_{2r}$ and a weakly connected closed component $S\in \mathcal S$ such that $V(S)\cup \partial_{4r} \Lambda_n$ weakly separates $\Lambda_{2r}$ from $v$ and that either
\begin{equation}\label{eq-enclose-Lambda-2r}
|S| \geq \frac{\min_{u\in \Lambda_{2r}, w\in V(S)} |u-w|_{\infty}}{100r}
\end{equation} 
or 
\begin{equation}\label{eq-enclose-v}
|S| \geq  \frac{\min_{ w\in V(S)} |v-w|_{\infty}}{100r} \mbox{ and } |S| \geq \kappa/100^d\,.
\end{equation}
(In spirit, in the above \eqref{eq-enclose-Lambda-2r} corresponds to the case that $\Lambda_{2r}$ ``does not percolate'' and \eqref{eq-enclose-v} corresponds to the case that there is big weakly connected closed component although $\Lambda_{2r}$ may percolate globally.)

The following lemma is a key input in order to upper-bound probabilities for either of the two cases.
\begin{lemma}\label{lem-S-closed}
For any $B\in \mathcal B_{2r}$, we have that 
$$\P(\exists S \in \mathcal S: B\in S \mbox{ and } |S| \geq t) \leq n^{-8^{-d}\epsilon t}\,.$$
\end{lemma}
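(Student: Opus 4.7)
The plan is to apply a standard Peierls-type coarse-graining argument, customary in subcritical percolation. I would proceed in three steps: (i) enumerate weakly connected subsets of $\mathcal{B}_{2r}$ of size $t$ that contain $B$; (ii) from each such subset extract a dissociated sub-collection whose boxes are pairwise far apart; and (iii) bound the joint probability that every box in the dissociated sub-collection is closed.

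For step (i), the weakly neighboring relation makes $\mathcal{B}_{2r}$ a graph of bounded degree $\Delta = \Delta(d)$, since the smallest corners of boxes in $\mathcal{B}_{2r}$ lie on a sublattice of spacing $r$ and weak neighboring constrains their pairwise $\ell^\infty$-distance to be at most $O(r)$. A classical enumeration of connected subgraphs in a bounded-degree graph then shows that the number of such subsets containing the fixed vertex $B$ is at most $(e\Delta)^t$. For step (ii), I would greedily pick $S'\subseteq S$ so that the boxes in $S'$ are pairwise at $\ell^\infty$-distance greater than $4r$ (hence mutually disjoint and not weakly neighboring). Since every ball of $\ell^\infty$-radius $O(r)$ in $\mathcal{B}_{2r}$ contains at most $8^d$ boxes, the greedy procedure yields $|S'|\ge t/8^d$.

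For step (iii), write $S' = \{B_1,\ldots, B_k\}$ with $k \ge t/8^d$. Since $B_i$ being closed means there exist an $(r-1)$-sub-box $A_i \subset B_i$ and a distinct $A_i' \in \mathfrak{B}_{r-1}$ with $\sigma|_{A_i} = \sigma|_{A_i'}$, a union bound over witness $k$-tuples gives
\[
\P(B_1,\ldots,B_k \text{ all closed}) \le \sum_{(A_i, A_i')_{i=1}^k} \P\bigl(\sigma|_{A_i} = \sigma|_{A_i'} \text{ for all } i\bigr) \le \bigl((r+2)^d\, n^d\, q^{-(r-1)^d}\bigr)^k \le n^{-ck\epsilon},
\]
where the second inequality uses that dissociation forces the $A_i$ to be pairwise disjoint, so for generic witnesses the $k$ equality events are independent and contribute $q^{-k(r-1)^d}$; the final bound (for some $c>0$) follows from \eqref{eq-assume-supercritical} exactly as in Lemma~\ref{lem-unique-typical}. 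Combining the three steps with $k\ge t/8^d$, the desired bound $n^{-8^{-d}\epsilon t}$ is obtained by absorbing the enumeration factor $(e\Delta)^t$ into a slight tightening of the exponent (valid for $n$ large) and tuning the constants in the dissociation.

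The main obstacle is the joint probability bound in step (iii): witness pairs for different boxes can in principle overlap (for example, $A_i'$ may intersect another $B_j$ or another $A_j'$), potentially creating correlations among the equality events. Dissociation renders the $A_i$ pairwise disjoint, but the $A_i'$ can a priori fall anywhere, so non-generic configurations need a careful combinatorial accounting: either they are rare enough in count or they impose additional equalities that further reduce the joint probability below $q^{-k(r-1)^d}$, and in either case their contribution is absorbed into the dominant term. This combinatorial analysis parallels and extends the witness-enumeration argument behind Lemma~\ref{lem-unique-typical}.
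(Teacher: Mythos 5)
Your steps (i) and (ii) match the paper's argument (the paper enumerates connected clusters via a depth-first-search contour, giving $16^{2dt'}$, and extracts a maximal disjoint sub-collection of size $\geq 4^{-d}t$; your $(e\Delta)^t$ and greedy separation are equivalent). The genuine gap is in step (iii), and it is exactly the point you flag as ``the main obstacle'' and then do not resolve. Making the $A_i$ pairwise disjoint does not control the witness targets $A_i'$, and your proposed dichotomy is wrong in one direction: when the witness pairs form cycles (e.g.\ $A_1'=A_2$ and $A_2'=A_1$), the overlap does \emph{not} ``impose additional equalities that further reduce the joint probability below $q^{-k(r-1)^d}$'' --- it \emph{raises} it (the two equality events coincide, giving $q^{-(r-1)^d}\gg q^{-2(r-1)^d}$). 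The only possible compensation is a reduction in the number of such witness tuples, and verifying that this reduction always wins over all cycle structures and partial overlaps is precisely the combinatorial work you defer. Since the whole lemma is a competition between an $n^{\Theta(dk)}$ enumeration and a $q^{-\Theta(k r^d)}$ probability with only an $n^{\epsilon k}$ margin, this cannot be waved away.

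The paper closes this gap with a forest-extraction device rather than a case analysis: from the $|\mathsf S|$ witness edges (one per closed box, joining $B'\subset B$ to its matching $B''$) it selects an acyclic subset $\mathcal E$ of size $t''=\lfloor|\mathsf S|/2\rfloor$, acyclic even after contracting each box of $\mathsf S$ to a single vertex. Acyclicity guarantees that each retained edge cuts the number of consistent labeling configurations by a full factor $q^{(r-1)^d}$ regardless of how the targets overlap, yielding $\P(\mbox{all closed})\le 2^{|\mathsf S|}((r+1)^d n^d)^{t''} q^{-(r-1)^d t''}\le n^{-\epsilon|\mathsf S|/4}$. The price is a factor of $2$ in the exponent (only half the boxes contribute), which is affordable given the target $n^{-8^{-d}\epsilon t}$. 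To complete your proof you would need either to import this forest argument or to carry out in full the overlap/cycle accounting you sketch; as written, the central estimate is asserted rather than proved.
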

\begin{proof}
If there exists $S \in \mathcal S$ with $B\in S$  and $|S| \geq t$, then we claim that there exists a collection of \emph{disjoint} and closed $2r$-boxes $\mathsf S$ with $|\mathsf S| \geq 4^{-d} t$ such that $\mathsf S$ is $8r$-weakly-connected (here $8r$-weakly connected corresponds to the $8r$-weakly-neighboring for two boxes which means that the $\ell_\infty$-distance between these two boxes is at most $8r$). In order to see this, we can for instance take a maximal $\mathsf S \subset S$ with $B\in \mathsf S$ such that all sets in $\mathsf S$ are mutually disjoint.  Since such $\mathsf S$ with $|\mathsf S| = t'$ induces at least one $8r$-weakly-connected tree on $\mathcal B_{2r}$ of size $t'$, and each such tree can be encoded by its depth-first-search contour started with $B$ and of length $2t'$, we can then upper-bound the number of choices for such $\mathsf S$ with $|\mathsf S| = t'$ by the number of $8r$-weakly-connected paths on $\mathcal B_{2r}$ started with $B$ and of length $2t'$. That is, the enumeration is bounded by $16^{2dt'}$. 

We next wish to upper-bound the  probability for all boxes in $\mathsf S$ being closed.  Due to disjointness, we may wish to bound this by $n^{-|\mathsf S|\epsilon/2}$ in light of Lemma~\ref{lem-unique-typical}. This is not completely correct since even for disjoint boxes their openness are not exactly independent, although the fix is easy as we explain next. 
For each $B \in \mathsf S$, since $B$ is closed then there exists an $(r-1)$-box $B' \subset B$ such that $\sigma|_{B'} = \sigma|_{B''}$ for some $B''\in \mathfrak B_{r-1}$, in which case we draw an edge between $B'$ and $B''$. In this way, we can draw an edge from each $B\in \mathsf S$ and we let $\mathcal E$ be the collection of all the edges. We can then take a subset of such  edges $\mathcal E$ with  $|\mathcal E| = t''= \lfloor|\mathsf S|/2\rfloor$ so that there is no cycle among these edges (there is no cycle even when edges are viewed as edges between sets in $\mathsf S$, that is, even when each edge between $B' \subset B$ and $\tilde B' \subset \tilde B$ for $B, \tilde B\in \mathsf S$ is viewed as an edge between $B$ and $\tilde B$). On the one hand, the number of labeling configurations that are consistent with $\mathcal E$ is at most $q^{|\Lambda_n|} q^{-(r-1)^d |\mathcal E|} = q^{|\Lambda_n|} q^{-(r-1)^d t''}$ (note that the acyclic property here ensures that each edge in $\mathcal E$ reduces the number of consistent labeling configurations by a factor of $q^{(r-1)^d}$). On the other hand, the number of possible choices for $\mathcal E$ is at most $2^{|\mathsf S|} ((r+1)^d)^{t''} (n^d)^{t''}$. Putting this together, we see that
\begin{equation*}
\P(\mbox{ all boxes in $\mathsf S$ are closed}) \leq  \frac{q^{|\Lambda_n|} q^{-(r-1)^d t''}}{ q^{|\Lambda_n|}}  2^{|\mathsf S|} ((r+1)^d)^{t''} (n^d)^{t''} \leq n^{-\epsilon |\mathsf S|/4}\,.
\end{equation*}
Combined with the aforementioned upper bound on the enumeration for $\mathsf S$, we get that
$$\P(\exists S \in \mathcal S: B\in S \mbox{ and } |S| \geq t)  \leq \sum_{t_1 \ge t}  \sum_{t' \geq 4^{-d}t_1} 16^{2dt'} n^{-\epsilon t'/4} \leq n^{-8^{-d}\epsilon t}\,,$$
as required.
\end{proof}

We are now ready to provide
\begin{proof}[Proof of Proposition~\ref{prop-Step-2}]
We first treat the case as in \eqref{eq-enclose-Lambda-2r}. In light of Lemma~\ref{lem-S-closed}, it suffices to sum over all choices for the ``starting'' box  $B$. Let $t$ be the right hand side of \eqref{eq-enclose-Lambda-2r}. Then the number of choices for $B\in \mathcal B_{2r}$ with $\ell_\infty$-distance at most $100rt$ to $\Lambda_{2r}$ is at most $(100t)^{d}$. Therefore,
\begin{equation}\label{eq-prob-enclose-Lambda-r}
\P(\exists S\in \mathcal S: \eqref{eq-enclose-Lambda-2r} ~ holds) \leq \sum_{t\geq 1}\sum_{t'\geq t} n^{-8^{-d}t'} (100t)^{d}  = o(1)\,.
\end{equation}
Similarly we can bound the case for \eqref{eq-enclose-v}. In this case, there is a slight difference in bounding the enumeration for the ``starting'' box  $B$: the vertex $v$ can be chosen arbitrarily and as a result the number of choices for $B$ is at most $n^d$.  Therefore, another application of Lemma~\ref{lem-S-closed} gives that
$$\P(\exists S\in \mathcal S: \eqref{eq-enclose-v} ~ holds) \leq \sum_{t\geq \kappa/100^d} n^{-8^{-d}\epsilon t} n^d  = o(1) $$
as long as $\kappa = \kappa (d, \epsilon)$ is a large enough constant. Combined with \eqref{eq-prob-enclose-Lambda-r}, this completes the proof of the first part of the proposition.

Finally, we show that $\mathrm{Corner} \subset \mathcal C_{2r}$. This follows since otherwise for some weakly connected closed component $S$ we have that $|S| \geq \frac{\min_{w\in V(S)}|v-w|_\infty}{100r}$ (i.e., \eqref{eq-enclose-v} without the part $|S| \geq \kappa/100^d$) for some $v\in \mathrm{Corner}$. Since $|\mathrm{Corner}| = O((\kappa' r)^d)$, a similar computation as in \eqref{eq-prob-enclose-Lambda-r} shows that this happens with probability $o(1)$.
\end{proof}

\subsection{Proofs for Step 3}

In this subsection, we prove Proposition~\ref{prop-Step-3}. For sets $A, A' \subset \Lambda_n$, if $A$ is a translated copy of $A'$, we then define $\mathsf v_{A, A'}$ as the unique vector such that $A' = \{v + \mathsf v_{A, A'}: v\in A\}$. Recall \eqref{eq-def-B-k-v}. If for $v\in \Lambda_N$ with $v_1, v_2, \ldots,v_d \leq n-r$ and $v_2 \geq r$ we have that $B_s(v)$ is not unique for all $s = 0, \ldots, r-1$. Then, recursively for $s = 0, \ldots, r-1$ we can pick a $\mathsf B'_s \neq B_s(v)$ such that $\sigma|_{\mathsf B'_s} = \sigma|_{B_s(v)}$, and further we pick $\mathsf B'_s$ such that $\mathsf v_{B_{s-1}(v), \mathsf B'_{s-1}} = \mathsf v_{B_{s}(v), \mathsf B'_{s}}$ for $s\geq 1$ if this is possible. Note that for $s_1<s_2$ if it is possible to pick $\mathsf B'_{s_1}$ and $\mathsf B'_{s_2}$ such that $\mathsf v_{B_{s_1}(v), \mathsf B'_{s_1}} = \mathsf v_{B_{s_2}(v), \mathsf B'_{s_2}}$ then it would have been possible and by our rule we would have picked $\mathsf B'_{s}$ for $s_1 < s<s_2$ such that $\mathsf v_{B_{s_1}(v), \mathsf B'_{s_1}} = \mathsf v_{B_{s}(v), \mathsf B'_{s}}$. Therefore, the interval $\{0, \ldots, r-1\}$ can be partitioned into $\ell\geq 1$ intervals $I_1, \ldots, I_\ell$ such that for each $j = 1, \ldots, \ell$ we have $B_{I_j}(v) \stackrel{\Delta}{=} \cup_{s\in I_j} B_s(v)$ satisfies $\sigma|_{B_{I_j}(v)} = \sigma|_{\mathsf B'_{I_j}}$ for some $\mathsf B'_{I_j}$ which is a translated copy of $B_{I_{j}}(v)$ with $\mathsf v_{B_{I_j}(v), \mathsf B'_{I_j}}$'s distinct from each other (and also not equal to the 0-vector).

Before we use the above construction to prove  Proposition~\ref{prop-Step-3}, we derive some preliminary results as preparation. By a similar argument as in Lemma~\ref{lem-unique-typical}, we obtain that
\begin{equation}\label{eq-unique-not-local}
\P(\exists B, B'\in \mathfrak B_{r-1}: \min_{u\in B, u'\in B'}|u - u'|_{\infty} \leq 4r \mbox{ and } \sigma|_B = \sigma|_{B'}) \leq n^{-\epsilon/4}\,.
\end{equation}
The next lemma will also be useful.
\begin{lemma}\label{lem-two-intersecting-pairs}
With probability at least $1-2n^{- \epsilon/4}$ the following holds for all $A, A', B, B'\in \mathfrak B_{r-1}$: 
\begin{equation}\label{eq-two-intersecting-pair}
\mbox{if }A \cap B \neq  \emptyset, A' \cap B' \neq \emptyset \mbox{ and } \mathsf v_{A, B} \neq \mathsf v_{A', B'}, \mbox{ then either }\sigma|_A  \neq \sigma|_{A'} \mbox{ or }\sigma|_{B} \neq \sigma|_{B'}.
\end{equation}
\end{lemma}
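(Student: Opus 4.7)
The plan is to set $v_1 = \mathsf v_{A,A'}$ and $v_2 = \mathsf v_{B,B'}$, observe that the hypothesis $\mathsf v_{A,B} \neq \mathsf v_{A',B'}$ is equivalent to $v_1 \neq v_2$ (since $\mathsf v_{A,B} - \mathsf v_{A',B'} = v_1 - v_2$), and partition the bad event according to whether $\min(|v_1|_\infty, |v_2|_\infty) \leq 4r$ (the \emph{clustered} case) or both $|v_1|_\infty > 4r$ and $|v_2|_\infty > 4r$ (the \emph{far} case).

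In the clustered case I can always exhibit two distinct $(r-1)$-boxes in $\{A,A',B,B'\}$ that lie within $\ell_\infty$-distance $4r$ and carry equal $\sigma$-configurations, which is precisely forbidden by the good event of \eqref{eq-unique-not-local}. For instance, if $0 < |v_1|_\infty \leq 4r$ then $A, A'$ are distinct, within distance $4r$, and satisfy $\sigma|_A = \sigma|_{A'}$. If $v_1 = 0$ then $A = A'$; the constraint $v_1 \neq v_2$ forces $B \neq B'$, and since both $B, B'$ intersect $A$ they lie within $2(r-1) \leq 4r$ of one another while $\sigma|_B = \sigma|_{B'}$. The cases involving $v_2$ are symmetric. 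Hence this case contributes at most $n^{-\epsilon/4}$ to the bad probability.

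In the far case, a direct triangle inequality using $|B-A|_\infty, |B'-A'|_\infty \leq r-1$ together with $|v_1|_\infty, |v_2|_\infty > 4r$ shows that the four ``cross'' intersections $A\cap A',\ A \cap B',\ A' \cap B,\ B \cap B'$ are all empty, so the clusters $A \cup B$ and $A' \cup B'$ are disjoint. I then introduce the constraint graph $G$ on $V = A \cup B \cup A' \cup B'$ with an edge $\{u, u+v_1\}$ for each $u \in A$ (encoding $\sigma|_A = \sigma|_{A'}$) and an edge $\{u, u+v_2\}$ for each $u \in B$ (encoding $\sigma|_B = \sigma|_{B'}$); since each connected component must be monochromatic under the joint event, its probability equals $q^{c(G) - |V|}$. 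The key step is to show that $G$ is a forest: disjointness of the two clusters makes $G$ bipartite with every edge crossing, so only vertices of $A \cap B$ or $A' \cap B'$ can have degree two, and any simple cycle must alternate $A$-edges and $B$-edges through such degree-two vertices. Starting at some $p \in A \cap B$ and tracing the alternation for $2k$ steps lands one at $p + k(v_1 - v_2)$, which cannot coincide with $p$ because $v_1 \neq v_2$. Hence the joint probability is at most $q^{-|E|} = q^{-2(r-1)^d}$, and summing over the at most $n^{2d}(2r)^{2d}$ relevant quadruples together with \eqref{eq-assume-supercritical} (which gives $q^{(r-1)^d} \geq n^{d(1+\epsilon/2)}$ for large $n$) bounds the far case by $o(n^{-\epsilon/4})$. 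Combining the two cases yields the desired $2n^{-\epsilon/4}$ bound; the main obstacle is the forest claim in the far case, which is exactly the place where the hypothesis $v_1 \neq v_2$ is used in a structural rather than a merely probabilistic way.
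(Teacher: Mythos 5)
Your proof is correct and follows essentially the same route as the paper's: the nearby configurations are absorbed into the event of \eqref{eq-unique-not-local}, and the far case uses the same acyclic constraint-graph counting giving $q^{-2(r-1)^d}$ per quadruple (your cycle-tracing step $p\mapsto p+k(v_1-v_2)$ is exactly the paper's contradiction $\mathsf v_{A,A'}=\mathsf v_{B,B'}$), followed by the same union bound over at most $n^{2d}(2r)^{2d}$ quadruples. Your handling of the degenerate cases $v_1=0$ or $v_2=0$ is in fact slightly more explicit than the paper's.
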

\begin{proof}
We first show that if $\min_{u\in A, u'\in A'} |u-u'|_{\infty} > 4r$ and if $A \cap B \neq  \emptyset$ and $A' \cap B' \neq \emptyset$ and in addition $\mathsf v_{A, B} \neq \mathsf v_{A', B'}$, then we have
\begin{equation}\label{eq-two-identical-copies}
\P(\sigma|_A  = \sigma|_{A'} \mbox{ and }\sigma|_{B} = \sigma|_{B'})=q^{-2(r-1)^d}\,.
\end{equation}
To see this, we count the number of labeling configurations on $A \cup A'\cup B\cup B'$ such that $\sigma|_A = \sigma|_{A'}$ and $\sigma|_{B} = \sigma|_{B'}$. Consider a graph with vertex set $A \cup A'\cup B\cup B'$ where the edge set is $\{(u, u + \mathsf v_{A, A'}): u\in A\} \cup \{(u, u' + \mathsf v_{B, B'}): u\in B\}$. Since we assumed $\min_{u\in A, u'\in A'} |u-u'|_{\infty} > 4r$, this graph is a bipartite graph between $A \cup B$ and $A'\cup B'$, and we claim that there is no cycle (nor multiple edge) in this graph. Otherwise, for some $j \geq 1$ there exist distinct $u_1, \ldots, u_j \in A \cup B$ and $u'_1, \ldots, u'_j \in A' \cup B'$ such that
$$u_i + \mathsf v_{A, A'} = u'_i \mbox{ and } u_{i+1} + \mathsf v_{B, B'} =  u'_{i} \mbox{ for } 1\leq i\leq j\,,$$
where we used the convention that $u_{j+1} = u_1$. This implies that $\mathsf v_{A, A'} = \mathsf v_{B, B'}$, arriving at a contradiction.
Therefore, each edge in this graph reduces the number of valid labeling configurations (i.e., those satisfy $\sigma|_A = \sigma|_{A'}$ and $\sigma|_B = \sigma|_{B'}$) by a factor of $q$. This implies \eqref{eq-two-identical-copies} since the number of edges is $2 (r-1)^d$.
Since the number of choices for $A, A', B, B'$ with $A \cap B\neq \emptyset$ and $A'\cap B' \neq \emptyset$ is at most $n^{2d}(2r)^{2d}$, we can apply a union bound and obtain that with probability at least $1 - n^{-\epsilon/4}$ for all $A, A', B, B'\in \mathfrak B_{r-1}$ with $\min_{u\in A, u'\in A'} |u-u'|_{\infty} > 4r$ we have \eqref{eq-two-identical-copies}. Combined with \eqref{eq-unique-not-local}, this implies the lemma.
\end{proof}
We now come back to the proof of Proposition~\ref{prop-Step-3}. Without loss of generality we assume that the event in \eqref{eq-unique-not-local} and in the lemma statement of Lemma~\ref{lem-two-intersecting-pairs}. Thus, it suffices to consider $B'_{I_j}$'s (which are possible realizations of $\mathsf B'_{I_j}$'s) such that
\begin{equation}\label{eq-disjointness-B-I-j}
B_{I_j}(v)\cap  B'_{I_{j'}} = \emptyset \mbox{ for all } 1\leq j, j' \leq \ell  \mbox{ and } B'_{I_j} \cap B'_{I_{j'}} = \emptyset \mbox{ for } 1\leq j\neq j' \leq \ell\,.
\end{equation}
We next count the number of labeling configurations on $\cup_{j=1}^\ell B_{I_j}(v) \cup B'_{I_j}$ such that $\sigma|_{B_{I_j}(v)} = \sigma|_{B'_{I_j}}$ for $1\leq j\leq \ell$. Consider a graph with vertex set  $\cup_{j=1}^\ell B_{I_j}(v) \cup B'_{I_j}$ and  edge set $\mathcal E = \cup_{j=1}^\ell\{(u, u + \mathsf v_{B_{I_j}(v), B'_{I_j}}: u\in B_{I_j}(v))\}$. By \eqref{eq-disjointness-B-I-j}, we see that for each $u\in  B'_{I_j}$ there is a single edge incident to $u - \mathsf v_{B_{I_j}(v), B'_{I_j}}$, i.e., the edge between $u$ and $u - \mathsf v_{B_{I_j}(v), B'_{I_j}}$. As a result, in this graph there is no cycle (or multiple edge). That is to say, each edge in this graph reduces the number of configurations by a factor of $q$. Since 
$$|\mathcal E| =\sum_{i=1}^\ell (r-1)r^{d-2}(r+|I_i| - 1)\geq (r-1)^{d}(1 + \ell)\,,$$
we see that 
$$\P(\sigma|_{B_{I_j}(v)} = \sigma|_{B'_{I_j}} \mbox{ for } 1\leq j\leq \ell) \leq q^{-(r-1)^{d}(1 + \ell)}\,.$$

Summing over $v\in \Lambda_n$,  $1 \le \ell \le r$,  all partitions of $I_1,\ldots,I_\ell$ and all choices of $B'_{I_j}$ for $1 \le j \le \ell$, we derive that
\begin{align*}
\P(\exists v\in \Lambda_n: v \mbox{ is undetermined}) &\leq \sum_{v\in \Lambda_n} \sum_{1\leq \ell \leq r} \sum_{I_1, \ldots, I_\ell} \sum_{B'_{I_1}, \ldots, B'_{I_\ell}} \P(\sigma|_{B_{I_j}(v)} = \sigma|_{B'_{I_j}} \mbox{ for } 1\leq j\leq \ell)\\
&\leq n^d r \binom{r-1}{\ell-1} n^{d\ell} q^{-(r-1)^{d}(1 + \ell)} = o(1)\,,
\end{align*}
completing the proof of Proposition~\ref{prop-Step-3}.

\section{Minor modifications for rotation and reflection symmetry} \label{sec:extension}

In this section, we briefly discuss how to extend our proof to obtain the same result as in Theorem~\ref{thm-main} when rotation and reflection symmetry is taken into account, as mentioned in Remark~\ref{rem-extension}. More precisely, we say a configuration $\sigma$ on $\Lambda_n$ is isomorphic to $\sigma'$ if there exists a map $\tau$ that is a composition of rotation and reflection of $\Lambda_n$ such that $\sigma = \sigma'(\tau)$. In this case, the local observations are given up to isomorphism with respect to an $r$-box, and our goal is to recover $\sigma$ on $\Lambda_n$ up to isomorphism with respect to $\Lambda_n$.

For the proof of non-identifiability, we must ensure that $\sigma'$ is not isomorphic to $\sigma$ where $\sigma'$ is obtained from swapping some labels  in $\sigma$ as we described in our proofs. In the case of $d=1$, we can partition $\Lambda_n$ into $8$ disjoint and consecutive intervals $I_1,\ldots,I_8$ with length $\lfloor n/8 \rfloor$, and perform a swapping with $I_2, \ldots, I_7$ replacing $I_1, \ldots, I_6$ as in Section~\ref{sec:1d}. We can then use $I_1,I_8$ to guarantee that the $\sigma'$ obtained from our swapping operation is not isomorphic to $\sigma$ (since with probability tending to 1 we have that $\sigma|_{I_1}$ is not isomorphic to $\sigma|_{I_8}$). In the case for $d \ge 2$, the proof of Proposition \ref{prop-match-empirical-profile} still works with the following additional property: with probability tending to 1 there is no rotation/reflection so that the labels that are not $\mathsf 1$ nor $2$ are preserved (this can be checked easily).

In the identifiable regime, the extension of our recovery procedure is a little more complicated as we next explain.
For each box $B$, we say $B$ has an automorphism if there is a composition of rotation and reflection which is non-identical and maps $\sigma|_B$ to itself. For each $B \in \mathfrak B_s$, we modify the definition of open such that $B$ is open if each $B' \in \mathfrak B_{r-1}$ that is contained in $B$ is unique and does not have an automorphism. The additional condition on automorphism ensures that Lemma \ref{lem-successively-determine} still holds since once a unique $(r-1)$-box $A$ without automorphism is determined, the vertices neighboring to $A$ can also be determined. Moreover, the probability that an $(r-1)$-box has an automorphism is at most $2^d q^{-(r-1)^d/2}= O(n^{-d/2})$, which is substantially smaller than the probability of being non-unique, so all of our probabilistic estimates remain valid.

We also need to modify Proposition \ref{prop-Step-1} and we can do it by using the following fact: on the one hand, if the labeling configuration of $A \in \mathfrak B_{r-1}$ appears only once as $\sigma|_{B'}$ where $B'$ is an $(r-1)$-box in some $r$-box $B$ (note that this event is measurable with respect to  $\{\sigma|_{B}: B\in \mathfrak B_{n, r}\}$), then $A$ must lie on the corner of $\Lambda_n$; on the other hand, if $A \in \mathfrak B_{r-1}$ is contained in a $2r$-box on a corner of $\Lambda_n$ and if this $(2r)$-box is open, then  the labeling configuration of $A$ appears only once as $\sigma|_{B'}$ where $B'$ is an $(r-1)$-box in some $r$-box $B$. Assuming that $\Lambda_{2r}$ is open, this fact ensures that in any possible labeling of $\Lambda_n$ with $\{\sigma|_{B}: B\in \mathfrak B_{n, r}\}$, the labeling configuration $\sigma|_{\Lambda_{r-1}}$ must appear on the corner of $\Lambda_n$. Since we only care about the labelings up to isomorphism, we can choose an arbitrary corner and put the labeling configuration $\sigma|_{\Lambda_{r-1}}$ there.

The proofs of Lemma \ref{lem-S-closed}, Proposition \ref{prop-Step-2}, Lemma \ref{lem-two-intersecting-pairs} and Proposition \ref{prop-Step-3} will be roughly same, except that we should choose the phantom box (i.e., the box that has the same labeling configuration as another box) together with its orientation when considering an $(r-1)$-box as non-unique. In view of this, we can define $\mathsf u_{A,A'}$ as the transformation in $\R^d$ that maps $A$ to $A'$, if $A,A' \subset \Lambda_n$ and $A$ is congruent to $A'$. In particular, if $A,A' \in \mathfrak B_s$, then $\mathsf u_{A,A'}$ can be seen as the composition of $\mathsf v_{A,A'}$ and an reflection/rotation that preserves $A'$. The analysis of the reduction of a factor $q$ for the enumeration of valid labeling configurations would be the same by replacing $\mathsf v_{A,A'}$ with $\mathsf u_{A,A'}$ (where instead of adding $\mathsf v_{A,A'}$ for the translation, we replace the transformation by the map $\mathsf u_{A,A'}$). Finally, all possible orientations together only contribute a multiplicative factor of $2^d$ to the probability for each box being non-unique, and as a result all of our probabilistic estimates remain valid.

	\small

\end{document}